\newenvironment{tfae}{
\begin{enumerate}}{\end{enumerate}}
\newtheorem{prop}{Proposition}[section]
\newtheorem{lemma}[prop]{Lemma}
\newtheorem{theorem}[prop]{Theorem}
\newtheorem{corollary}[prop]{Corollary}
\theoremstyle{definition}
\newtheorem{definition}[prop]{Definition}
\newtheorem{remark}[prop]{Remark}
\renewcommand{\subsection}[1]{\addtocounter{subsection}{1}
\vspace*{2ex}\noindent\textbf{\thesubsection}\hspace{1ex}{\bf #1}}
\def\mathrmdef#1{\expandafter\def\csname#1\endcsname{{\rm#1}}}
\def\mathsfdef#1{\expandafter\def\csname#1\endcsname{{\sf#1}}}
 \def\mathbfdef#1{\expandafter\def\csname#1\endcsname{{\rm\bf#1}}}
\def\NN{\mathbb{N}}
\def\ZZ{\mathbb{Z}}
\def\QQ{\mathbb{Q}}
\def\CC{\mathsf{C}}
\def\SS{\mathcal{S}}
\def\calP{\mathcal{P}}
\begin{document}  
\title{On split extensions of preordered groups}  
\author{Maria Manuel Clementino}
\address{University of Coimbra, CMUC, Department of Mathematics, 3000-143 Coimbra, Portugal}\thanks{The first author acknowledges partial financial assistance by the Centre for Mathematics
of the University of Coimbra -- UIDB/00324/2020, funded by the Portuguese Government through
FCT/MCTES}
\email{mmc@mat.uc.pt}

\author{Carla Ruivo}\address{Department of Mathematics, 3000-143 Coimbra, Portugal}\thanks{This work was carried out within the scope of a grant from the Gulbenkian Foundation programme \emph{Novos Talentos em Matem\'{a}tica}}
\email{cruivo@proton.me}

\begin{abstract}
We investigate the behaviour of split extensions in the category $\OrdGrp$ of (pre)\-ordered groups. Namely we show that the lexicographic order plays a key role on the existence of compatible orders for semidirect products, establishing necessary and sufficient conditions for such existence; we prove that the Split Short Five Lemma holds for stably strong split extensions, and identify classes of split extensions which admit a classifier.
\end{abstract}
\subjclass[2020]{06F15, 18E13, 08C05}
\keywords{preordered group, positive cone, split extension, $S$-protomodular category}

\maketitle  

\section{Introduction}

In \cite{CMFM} the authors studied the behaviour of the category $\OrdGrp$ of preordered groups and monotone group homomorphisms. They show in particular that, unlike the categories $\Grp$ of groups and $\TopGrp$ of topological groups, $\OrdGrp$ is not protomodular, and, consequently, the Split Short Five Lemma does not hold. This relies essentially on the study of possible orders (as very common in the literature, throughout by \emph{order} we mean preorder) in a semidirect product $X\rtimes_\varphi B$ in $\Grp$ of two ordered groups $X$ and $B$ so that
\[\xymatrix{X\ar[r]^-{\langle 1,0\rangle}&X\rtimes_\varphi B\ar@<-2pt>[r]_-{\pi_2}&B\ar@<-2pt>[l]_-{\langle 0,1\rangle}}\]
is a split extension in $\OrdGrp$. Calling these orders \emph{compatible}, it is shown in \cite{CMFM} that compatible orders must contain the product order and be contained in the (reverse) \emph{lexicographic order}, and that they may not exist, or there may be plenty of them.

This note complements the study of split extensions presented in \cite{CMFM}. Indeed, we establish necessary and sufficient conditions in order to a compatible order in $X\rtimes_\varphi B$ exist, and show that the compatibility of the lexicographic order is essential. In case it exists, we identify both the maximal order, which is the lexicographic order, and the minimal one, which in general does not coincide with the product order.

Strong split extensions, or strong points, are exactly those with minimal order. It is shown that strong points need not be stable under pullback, and that the Split Short Five Lemma holds for stably strong points. This way we identify a sort of relative protomodularity which is possibly weaker than the notion introduced in \cite{BMMS}, but which still guarantees the validity of the Split Short Five Lemma, and, consequently, the reflection of isomorphisms by the corresponding change of base functors, as for protomodularity \cite{B,BBbook}.

Finally, we investigate the existence of split extension classifier, showing that this is possible only for special classes of split extensions, as for instance for those that may be identified as ralis (=right adjoints and left inverses) for the $\Ord$-enrichment of $\OrdGrp$ considered in \cite{CMR}.

\section{Preliminaries} \label{section first properties}

Let $\OrdGrp$ be the category of ordered groups and monotone group homomorphisms. By an ordered group we mean a (non-necessarily abelian) group $X$ equipped with an \emph{order} (i.e. a reflexive and transitive) relation $\leq$  such that the group operation (here denoted by $+$) is monotone. We point out that in general the group inversion is not monotone; it is in fact necessarily anti-monotone. The order is completely determined by its \emph{positive cone} $P=\{x\in X\,;\,x\geq 0\}$, which is a submonoid of $X$ closed under conjugation. Moreover, for any group $X$, any submonoid closed under conjugation defines an order on $X$.

\begin{remark}
Given a subset $A$ of a group $X$, the least order on $X$ whose positive cone contains $A$ is obtained in two steps: first we consider the closure $\hat{A}$ of $A$ under conjugation and then the closure of $\hat{A}$ under addition, which we denote by $\langle A\rangle$.
\end{remark}

The category $\OrdGrp$ has both an algebraic and a topological flavour; indeed, it was shown in \cite{CMFM} that the forgetful functors $\OrdGrp\to\Grp$ and $\OrdGrp\to\Ord$ are, respectively, topological and monadic. These functors allow us to construct limits and colimits easily, and are the basis for the categorical study of $\OrdGrp$.
Therefore, in order to study the behaviour of split extensions in $\OrdGrp$ we start by recalling briefly the behaviour of split extensions in $\Grp$. By \emph{split extension} we mean a short exact sequence $(\xymatrix{X\ar[r]^k&A\ar[r]^f&B})$, with $k=\ker f$ and $f=\coker k$, where $f$ is a split epimorphism and a splitting of $f$ is given:
\begin{equation}\label{eq:splitgrp}
\xymatrix{X\ar[r]^k&A\ar@<-2pt>[r]_f&B\ar@<-2pt>[l]_s}
\end{equation}
A morphism between split extensions is a triple $(a,b,c)$ making the following diagram commutative
\begin{equation}\label{eq:morph}
\xymatrix{X\ar[d]_{a}\ar[r]^k&A\ar[d]^{b}\ar@<-2pt>[r]_f&B\ar[d]^{c}\ar@<-2pt>[l]_s\\
X'\ar[r]^{k'}&A'\ar@<-2pt>[r]_{f'}&B'\ar@<-2pt>[l]_{s'}}
\end{equation}
so that $k'\cdot a=b\cdot k$, $f'\cdot b=c\cdot f$, and $s'\cdot c=b\cdot s$.

It is well-known that every split extension in $\Grp$ is isomorphic to one given by a \emph{semidirect product}, i.e. $A$ is necessarily isomorphic to the group $X\rtimes_\varphi B$ having as underlying set the cartesian product $X\times B$ and, for $(x,b),(x',b')$ in $X\times B$,
\[(x,b)+(x',b')=(x+\varphi(b,x'),b+b'),\]
where $\varphi\colon B\times X\to X$ is an \emph{action} of $B$ on $X$ (so that $\varphi(0,x)=x$, $\varphi(b,x+x')=\varphi(b,x)+\varphi(b,x')$, $\varphi(b',\varphi(b,x))=\varphi(b'+b,x)$). This induces an isomorphism of split extensions
\[\xymatrix{X\ar@{=}[d]\ar[r]^k&A\ar[d]^{\theta}\ar@<-2pt>[r]_f&B\ar@{=}[d]\ar@<-2pt>[l]_s\\
X\ar[r]^-{\langle 1,0\rangle}&X\rtimes_\varphi B\ar@<-2pt>[r]_-{\pi_2}&B\ar@<-2pt>[l]_-{\langle 0,1\rangle}},\]
with $\theta(a)=(a-sf(a),f(a))$ for every $a\in A$, and $\varphi_b(x)=\varphi(b,x)=s(b)+k(x)$ (see for instance \cite[Section 4.1]{Cl21} for details).

In $\Grp$ split extensions with given kernel have a \emph{classifier}, in the sense that the category of split extensions with kernel $X$ has a terminal object, i.e. there exists a split extension with kernel $X$
\begin{equation}\label{eq:autX}
\xymatrix{X\ar[r]&X\rtimes\AUT(X)\ar@<-2pt>[r]&\AUT(X)\ar@<-2pt>[l]}
\end{equation}
such that, for each split extension \eqref{eq:splitgrp} there exists exactly one morphism $(a,b,c)$ from \eqref{eq:splitgrp} to \eqref{eq:autX} with $a=\id_X$; here $\AUT(X)$ is the group $\{\alpha\colon X\to X\,;\,\alpha\mbox{ is an automorphism}\}$, with the operation given by composition, and the addition on the semidirect product $X\rtimes\AUT(X)$ given by
\[(x,\alpha)+(x',\alpha')=(x+\alpha(x'),\alpha\cdot\alpha').\]
The claimed morphism of split extensions is then given by
\begin{equation}\label{eq:AutX}
\xymatrix{X\ar@{=}[d]\ar[r]^k&A\ar[d]^{\theta}\ar@<-2pt>[r]_f&B\ar@{=}[d]\ar@<-2pt>[l]_s\\
X\ar@{=}[d]\ar[r]^-{\langle 1,0\rangle}&X\rtimes_\varphi B\ar@<-2pt>[r]_-{\pi_2}\ar[d]^{1\times\overline{\varphi}}&B\ar@<-2pt>[l]_-{\langle 0,1\rangle}\ar[d]^{\overline{\varphi}}\\
X\ar[r]^-{\langle 1,0\rangle}&X\rtimes\AUT(X)\ar@<-2pt>[r]_-{\pi_2}&\AUT(X)\ar@<-2pt>[l]_-{\langle 0,1\rangle}}\end{equation}
where $\overline{\varphi}(b)=\varphi_b$.

Alternatively one also says that in $\Grp$ \emph{actions are representable}, since this property can be stated as \emph{representability of a functor} into $\Set$ (see \cite{BJK1, BJK2} for details).

\section{Compatible orders} \label{section split extensions}

Throughout this section
\begin{equation}\label{eq:split}
\xymatrix{(X,P_X) \ar[r]^-{\langle 1,0\rangle} & X\rtimes_\varphi B \ar@<-2pt>[r]_-{\pi_B} & (B,P_B) \ar@<-2pt>[l]_-{\langle 0,1\rangle}}
\end{equation} is a split extension in $\Grp$, and $X$ and $B$ are ordered groups, with positive cones $P_X$ and $P_B$ respectively.
As shown in \cite{CMFM} there may be no -- or there may be plenty of -- orders in $X\rtimes_\varphi B$ making \eqref{eq:split} a split extension in $\Ord\Grp$. Here we will identify exactly those split extensions \eqref{eq:split} for which there exist compatible orders. For that we make use of:
\begin{itemize}
\item the \emph{product order}, with positive cone $P_\prod=P_X\times P_B$, and
\item the \emph{lexicographic order}, with positive cone
\[P_\lex=\{(x,b)\in X\times B\,|\, b>0\mbox{ or }(b\sim 0\mbox{ and }x\geq 0)\}.\]
\end{itemize}
We point out that these orders need not be compatible in \eqref{eq:split}, and that this lexicographic order is larger than the one defined in \cite{CMFM}. In fact the lexicographic order in Proposition 5.1 of \cite{CMFM} should have been this one and not the one considered in \cite{CMFM}.

\begin{prop}
For a positive cone $P$ in $X \rtimes_{\varphi} B$, the following conditions are equivalent:
\begin{tfae}
\item $P$ is compatible in \eqref{eq:split};
\item $P_\prod \subseteq P \subseteq P_{\lex}$.
\end{tfae}
\end{prop}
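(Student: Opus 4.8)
The plan is to prove the two implications separately, treating (i)$\Rightarrow$(ii) as the ``constraint'' direction and (ii)$\Rightarrow$(i) as the ``verification'' direction.

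For (i)$\Rightarrow$(ii), suppose $P$ is compatible, i.e.\ it is the positive cone of an order on $X\rtimes_\varphi B$ for which $\langle 1,0\rangle$, $\pi_B$ and $\langle 0,1\rangle$ are all monotone. Monotonicity of $\langle 1,0\rangle$ forces $\{(x,0)\mid x\in P_X\}\subseteq P$, and monotonicity of $\langle 0,1\rangle$ forces $\{(0,b)\mid b\in P_B\}\subseteq P$; since $P$ is a submonoid, adding these gives $(x,0)+(0,b)=(x,\varphi(0,b)\text{-part})$. Here I must be slightly careful with the semidirect-product formula: $(x,0)+(0,b)=(x+\varphi(0,0),0+b)=(x,b)$, so indeed $P_\prod=P_X\times P_B\subseteq P$. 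For the upper bound, take $(x,b)\in P$; applying the monotone map $\pi_B$ gives $b\in P_B$, i.e.\ $b\geq 0$. If $b\sim 0$ (that is, $b\geq 0$ and $b\leq 0$ in $B$), I want to conclude $x\geq 0$ in $X$. The idea is that $(0,-b)\in P$ as well (since $-b\in P_B$), and then $(0,-b)+(x,b)\in P$; computing this product in the semidirect product yields an element of the form $(\varphi(-b,x),0)$ lying in $P$. Now $\langle 1,0\rangle^{-1}$ of an element of $P$ of the form $(y,0)$: compatibility only gives that $\langle 1,0\rangle$ is monotone, not an order-embedding, so I cannot immediately deduce $\varphi(-b,x)\geq 0$. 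Instead I should use that $P$ is a positive cone \emph{on the group $X\rtimes_\varphi B$}, and that $X$ sits inside as a normal subgroup whose induced order (the restriction of $P$) must make $\langle 1,0\rangle$ monotone into it — but to get $(x,b)\in P_{\lex}$ I actually need the reverse containment on $X$. The correct tool is that $X\rtimes_\varphi B\to B$ has kernel $X$, and one shows directly that for an element with $b\sim 0$ the pair $(x,b)$ and $(-x,-b)$-type manipulations, combined with $P$ being closed under conjugation by $\langle 0,1\rangle(b')$, pin down $x\in P_X$; I would phrase this as: $(x,b)\in P$ and $(0,b)^{-1}=(0,-b)\in P$, so $(x,b)+(0,-b)=(x+\varphi(b,0),b-b)=(x,0)\in P$, and symmetrically from $b\sim 0$ we get $(x,0)\in P$ forces — via the fact that the only way $(x,0)$ and its behaviour under the retraction are consistent — that $x\geq 0$. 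The cleanest route is to observe that the restriction of the order $P$ to the subgroup $X\times\{0\}$ has positive cone exactly $P\cap(X\times\{0\})$, this makes $X$ an ordered group, and the two monotone maps $X\rightrightarrows X$ (identity via $\langle 1,0\rangle$ then the obvious retraction when $b$-coordinates vanish) force $P\cap(X\times\{0\})=P_X\times\{0\}$; hence $(x,0)\in P\iff x\in P_X$, giving $x\geq 0$ as needed.

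For (ii)$\Rightarrow$(i), assume $P_\prod\subseteq P\subseteq P_{\lex}$. I must check three things: that $P$ is a submonoid of $X\rtimes_\varphi B$ closed under conjugation (so that it defines an order), and that each of the three structure maps is monotone. Monotonicity of $\langle 1,0\rangle$: if $x\in P_X$ then $(x,0)\in P_\prod\subseteq P$. Monotonicity of $\langle 0,1\rangle$: if $b\in P_B$ then $(0,b)\in P_\prod\subseteq P$. Monotonicity of $\pi_B$: if $(x,b)\in P$ then $(x,b)\in P_{\lex}$, so either $b>0$ or $b\sim 0$; in both cases $b\in P_B$, i.e.\ $\pi_B(x,b)=b\geq 0$. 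It remains only to confirm that $P$ genuinely \emph{is} a positive cone, but this is not automatic from the sandwich — $P$ is given to us as ``a positive cone in $X\rtimes_\varphi B$'' in the hypothesis of the proposition, so it is a submonoid closed under conjugation by assumption, and there is nothing further to prove. (If instead one wanted the sandwich condition alone to produce such a $P$, one would have to know $P_\prod$ and $P_{\lex}$ are themselves submonoids closed under conjugation and that the interval between two such is closed under the relevant operations — $P_\prod$ clearly is; for $P_{\lex}$ one checks directly using that $\pi_B$ is a homomorphism and $\{b>0\}\cup\{b\sim0\}$ is conjugation-stable in $B$ — but since the statement already posits $P$ as a positive cone, this is not needed.)

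The main obstacle is the subtle point in (i)$\Rightarrow$(ii): extracting $x\geq 0$ from $(x,b)\in P$ when $b\sim 0$. The naive appeal to ``$\langle 1,0\rangle$ monotone'' goes the wrong way, so the argument must instead exploit that $P$ restricted to $X\times\{0\}$ is a compatible order on $X$ itself and is squeezed — using the retraction available on elements with vanishing $B$-coordinate — to be exactly $P_X\times\{0\}$. Making this squeeze precise, and in particular checking that the relevant retraction $X\times\{0\}\to X$ is monotone for the restricted order (which follows since it is the restriction of a composite of monotone maps), is the technical heart; everything else is a direct unwinding of the definition of compatibility and the semidirect-product operation.
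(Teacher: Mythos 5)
There is a genuine gap, and it sits exactly at the point you flag as ``the technical heart''. Compatibility here does not mean merely that $\langle 1,0\rangle$, $\pi_B$ and $\langle 0,1\rangle$ are monotone: it means the displayed diagram is a split extension in $\OrdGrp$, so that $\langle 1,0\rangle$ must be the \emph{kernel} of $\pi_B$ in $\OrdGrp$. Since the forgetful functor $\OrdGrp\to\Grp$ is topological, that kernel is the group-theoretic kernel equipped with the order induced by $P$; hence compatibility forces $(x,0)\in P\Leftrightarrow x\in P_X$. This single fact is what the paper uses to finish (i)$\Rightarrow$(ii): from $(x,b)\in P$ with $b\sim 0$ one has $(0,-b)\in P_\prod\subseteq P$, hence $(x,0)=(x,b)+(0,-b)\in P$, hence $x\in P_X$. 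Your substitute --- a ``retraction $X\times\{0\}\to X$'' said to be ``the restriction of a composite of monotone maps'' --- is not substantiated: no monotone map into $(X,P_X)$ is available from the data, and the monotonicity of $(x,0)\mapsto x$ for the restricted order is precisely the inclusion $P\cap(X\times\{0\})\subseteq P_X\times\{0\}$ you are trying to prove, so the argument is circular. That something beyond monotonicity of the three maps is genuinely needed is shown by a trivial example: take $B=0$, $X=(\ZZ,\NN)$ and $P=\ZZ$; all three maps are monotone, yet $P\not\subseteq P_\lex=\NN$.

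The same point is missing from your (ii)$\Rightarrow$(i): checking monotonicity of the three structure maps is not enough; one must also verify that $\langle 1,0\rangle\colon(X,P_X)\to(X\rtimes_\varphi B,P)$ is the kernel of $\pi_B$ in $\OrdGrp$, i.e.\ that $(x,0)\in P$ implies $x\in P_X$. This is exactly where the upper bound is used a second time: if $(x,0)\in P\subseteq P_\lex$ then, since $0\sim 0$, we get $x\geq 0$. In your write-up $P\subseteq P_\lex$ serves only to make $\pi_B$ monotone, so your argument would ``prove'' the false claim that $P_\prod\subseteq P$ alone yields compatibility. The remaining parts of your proposal --- the derivation of $P_\prod\subseteq P$ via $(x,0)+(0,b)=(x,b)$, the use of $\pi_B$ to get $b\geq 0$, and the observation that $P$ being a positive cone is part of the hypothesis --- agree with the paper's proof.
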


\begin{proof} (i)$\Rightarrow$(ii):
Monotonicity of $\xymatrix{X \ar[r]^-{\langle 1,0\rangle} & X\rtimes_\varphi B & B \ar[l]_-{\langle 0,1\rangle}}$ implies $(x,0)\in P$ and $(0,b)\in P$ when $x\in P_X$ and $b\in P_B$, and therefore $(x,b)=(x,0)+(0,b)\in P$, that is $P_{\prod}\subseteq P$.

If $(x,b)\in P$ then necessarily $b\geq 0$ because $\pi_B$ is monotone. When $b\sim 0$ and $(x,b)\in P$, then $(0,b)\sim(0,0)$ and so $(x,0)=(x,b)-(0,b)\in P$, which implies $x\geq 0$ because $\langle 1,0\rangle\colon X\to X\rtimes_\varphi B$ is a kernel in $\Ord\Grp$.

(ii)$\Rightarrow$(i): For every $b\in P_B$, $x\in X$, $(x,b)\geq 0$ implies $b\geq 0$, hence $\pi_B$ is monotone. For every $x\in X$, $(x,0)\in P$ if and only if $x\in P_X$, and therefore $\langle 1,0\rangle\colon (X,P_X)\to(X\rtimes_\varphi B,P)$ is the kernel of $\pi_B$. Finally, if $b\geq 0$ then $(0,b)\in P_\prod\subseteq P$, and so $\langle 0,1\rangle\colon (B,P_B)\to(X\rtimes_\varphi B,P)$ is also monotone.
\end{proof}

The lexicographic order plays an essential role here, since it is compatible as soon as there is a compatible order, as we show next.

\begin{theorem}\label{th:lexic}
Given \eqref{eq:split}, the following conditions are equivalent:
\begin{tfae}
\item There is a compatible order in \eqref{eq:split}.
\item For every $b\in B$ $\varphi_b$ is monotone, and, if $b\sim 0$ then $\varphi_b\sim \id$ (pointwise).
\item The lexicographic order is compatible in \eqref{eq:split}.
\end{tfae}
\end{theorem}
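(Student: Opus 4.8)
The plan is to prove the cycle of implications (iii)$\Rightarrow$(i)$\Rightarrow$(ii)$\Rightarrow$(iii); the first is trivial, since a compatible lexicographic order is in particular a compatible order. For (i)$\Rightarrow$(ii) I would fix a compatible positive cone $P$, so that $P_\prod\subseteq P\subseteq P_\lex$ by the Proposition, and use two conjugation identities in $X\rtimes_\varphi B$: the conjugate of $(x,0)$ by $(0,b)$ is $(\varphi_b(x),0)$, and the conjugate of $(0,b)$ by $(x,0)$ is $(x-\varphi_b(x),b)$. From the first, if $x\in P_X$ then $(x,0)\in P_\prod\subseteq P$, hence $(\varphi_b(x),0)\in P$ by closure under conjugation, hence $\varphi_b(x)\in P_X$ since $\langle 1,0\rangle$ is a kernel in $\OrdGrp$ (equivalently, $(z,0)\in P$ iff $z\in P_X$); so $\varphi_b$ is monotone for \emph{every} $b\in B$, not just for $b\geq 0$. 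If moreover $b\sim 0$, then $(0,b),(0,-b)\in P_\prod\subseteq P$, so $(0,b)\sim 0$ for the order $P$ and hence so is each of its conjugates; by the second identity $(x-\varphi_b(x),b)\in P\subseteq P_\lex$ for \emph{every} $x\in X$, and since $b\sim 0$ the description of $P_\lex$ forces $x-\varphi_b(x)\geq 0$, i.e.\ $\varphi_b(x)\leq x$. Applying this with $-b$ in place of $b$ (legitimate, as $-b\sim 0$) and then the monotone map $\varphi_b$, and using $\varphi_b\circ\varphi_{-b}=\id$, gives $x\leq\varphi_b(x)$; hence $\varphi_b\sim\id$ pointwise.

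For (ii)$\Rightarrow$(iii) it suffices, by the Proposition, to check that $P_\lex$ is genuinely a positive cone, i.e.\ a submonoid of $X\rtimes_\varphi B$ closed under conjugation (then $P=P_\lex$ trivially satisfies $P_\prod\subseteq P\subseteq P_\lex$). Each of the two verifications splits on the $B$-component of the element produced --- a sum of two elements of $P_\lex$, or a conjugate of one. If that $B$-component is $>0$ --- which, since $P_B$ is closed under conjugation, happens whenever an input $B$-component is $>0$ (using $c+b-c>0$ when $b>0$, and $b+b'>0$ when $b>0$, $b'\geq 0$) --- the element lies in $P_\lex$ regardless of its $X$-component, and (ii) is not used. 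Otherwise the $B$-component is $\sim 0$, which (again by closure of $P_B$ under conjugation) forces the input $B$-components to be $\sim 0$ as well, and here (ii) enters: for a sum $(x,b)+(x',b')=(x+\varphi_b(x'),b+b')$ with $b,b'\sim 0$ one has $x,x'\geq 0$, and monotonicity of $\varphi_b$ gives $\varphi_b(x')\geq 0$, so $x+\varphi_b(x')\geq 0$; for the conjugate of $(x,b)$ by $(y,c)$, whose $X$-component is $y+\varphi_c(x)-\varphi_{c+b-c}(y)$, the hypotheses $c+b-c\sim 0$ and $\varphi_{c+b-c}\sim\id$ give $\varphi_{c+b-c}(y)\leq y$, so $-\varphi_{c+b-c}(y)\geq -y$ and the $X$-component is $\geq y+\varphi_c(x)-y$, a conjugate of $\varphi_c(x)\geq 0$, hence $\geq 0$ by closure of $P_X$ under conjugation.

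The only genuine, if routine, work lies in the second paragraph: the case analysis that $P_\lex$ is a submonoid closed under conjugation, which throughout relies on closure of $P_X$ and $P_B$ under conjugation and on the two clauses of (ii). I do not expect any single computation to be an obstacle; the real point is to isolate the correct intermediate statements in (i)$\Rightarrow$(ii) --- that $\varphi_b$ must be monotone for \emph{every} $b$, and that conjugating the ``infinitesimal'' element $(0,b)$ by an arbitrary $(x,0)$ is the manoeuvre converting $b\sim 0$ into the pointwise relation $\varphi_b\sim\id$.
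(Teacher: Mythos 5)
Your proof is correct and follows essentially the same route as the paper: the same two conjugation identities $(0,b)+(x,0)-(0,b)=(\varphi_b(x),0)$ and $(x,0)+(0,b)-(x,0)=(x-\varphi_b(x),b)$ drive (i)$\Rightarrow$(ii), and the same case split on whether the $B$-component is $>0$ or $\sim 0$ gives closure of $P_\lex$ under addition and conjugation. The only cosmetic difference is that to get $x\leq\varphi_b(x)$ the paper substitutes $-x$ for $x$ in $x\geq\varphi_b(x)$, whereas you substitute $-b$ for $b$ and compose with the monotone map $\varphi_b$; both are fine.
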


\begin{proof}
(i)$\Rightarrow$(ii): Let $P$ be a compatible positive cone. If $x\geq 0$, then, for any $b\in B$, $(\varphi_b(x),0)=(0,b)+(x,0)-(0,b)\in P$, that is $\varphi_b(x)\geq 0$. Now let $b\sim 0$ in $B$. Then, for every $x\in X$, $(x-\varphi_b(x),b)=(x,0)+(0,b)-(x,0)\in P$ and so $x\geq\varphi_b(x)$; this, together with $-x\geq\varphi_b(-x)=-\varphi_b(x)$ gives $x\sim \varphi_b(x)$.

(ii)$\Rightarrow$(iii): We need to prove that $P_\lex$ is a positive cone, that is, it is closed under addition and conjugation. If $(x,b),(x',b')\in P$ and $b>0$ or $b'>0$, then, obviously, $(x,b)+(x',b')\in P$; if both $b\sim 0$ and $b'\sim 0$, then in $(x,b)+(x',b')=(x+\varphi_b(x'),b+b')$ we have $b+b'\sim 0$ and $x+\varphi_b(x')\geq 0$ because both $x$ and $\varphi_b(x')$ are positive. Now let $(x,b)\in P$ and $(y,a)\in X\rtimes_\varphi B$. Then
\[(y,a)+(x,b)-(y,a)=(y,a)+(x,b)-(0,a)-(y,0)=(y+\varphi_a(x)-\varphi_{a+b-a}(y),a+b-a);\]
if $b>0$, then $a+b-a>0$ and so the pair above belongs to $P_\lex$; if $b\sim 0$, then $x\geq 0$ and $a+b-a\sim 0$, and so $y+\varphi_a(x)-\varphi_{a+b-a}(y)\sim y+\varphi_a(x)-y\geq 0$.

(iii)$\Rightarrow$(i) is trivial.
\end{proof}

\begin{corollary}
If the order in $B$ is antisymmetric, then there is a compatible order in \eqref{eq:split} if, and only if, $\varphi_b$ is monotone for every $b\in B$.
\end{corollary}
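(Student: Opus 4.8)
The plan is to obtain the corollary as an immediate specialization of Theorem~\ref{th:lexic}. The key observation I would make first is that antisymmetry of the order on $B$ says exactly that $b\sim 0$ forces $b=0$; and since $\varphi$ is a group action, $\varphi_0=\id_X$, so $\varphi_0\sim\id$ holds trivially. Consequently the second clause of condition~(ii) of Theorem~\ref{th:lexic} --- ``if $b\sim 0$ then $\varphi_b\sim\id$ (pointwise)'' --- is automatically satisfied under the antisymmetry hypothesis (it is vacuous except when $b=0$, where it reduces to $\varphi_0=\id_X$). Thus, for an antisymmetric $B$, condition~(ii) of Theorem~\ref{th:lexic} collapses to the single requirement that $\varphi_b$ be monotone for every $b\in B$.

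Granting this reduction, both implications follow at once. If \eqref{eq:split} admits a compatible order, then by (i)$\Rightarrow$(ii) of Theorem~\ref{th:lexic} every $\varphi_b$ is monotone --- and this direction uses nothing about $B$. Conversely, if every $\varphi_b$ is monotone, then by the reduction above condition~(ii) of Theorem~\ref{th:lexic} is met, and (ii)$\Rightarrow$(iii)$\Rightarrow$(i) produces a compatible order (concretely, the lexicographic one, which by the theorem is the maximal such order).

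I do not expect any genuine obstacle here: the statement is a routine corollary of Theorem~\ref{th:lexic}, and the only subtlety worth spelling out explicitly is that $\varphi_0=\id_X$ is built into the notion of a group action, so that the lone case $b=0$ of the clause ``$\varphi_b\sim\id$'' is covered as well, making that clause genuinely vacuous in the antisymmetric setting.
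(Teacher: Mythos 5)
Your proof is correct and matches the paper's (implicit) argument: the corollary is stated without proof precisely because, as you observe, antisymmetry reduces the clause ``$b\sim 0\Rightarrow\varphi_b\sim\id$'' in condition (ii) of Theorem~\ref{th:lexic} to the trivial case $b=0$, where $\varphi_0=\id_X$ by the action axioms. Nothing further is needed.
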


We may consider now the set $\calP$ of compatible positive cones for \eqref{eq:split}. We have just shown that either $\calP$ is empty or it has a top element, $P_\lex$, when ordered by inclusion. A split extension \eqref{eq:split} where $X\rtimes_\varphi B$ has the lexicographic order will be called \emph{maximal}.

\begin{prop}
Either $\calP=\emptyset$ or $\calP$ is a complete lattice.
\end{prop}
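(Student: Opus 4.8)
The plan is: if $\calP=\emptyset$ there is nothing to prove, so assume $\calP\neq\emptyset$ and show that the poset $(\calP,\subseteq)$ is a complete lattice by checking that \emph{every} subfamily of $\calP$ — the empty one included — has an infimum in $(\calP,\subseteq)$; a poset with this property is automatically a complete lattice, with $\bigvee\mathcal S$ computed as the infimum of the set of upper bounds of $\mathcal S$. Throughout I would use freely the characterisation proved above, namely that a positive cone $P$ of $X\rtimes_\varphi B$ is compatible in \eqref{eq:split} precisely when $P_\prod\subseteq P\subseteq P_\lex$, together with the fact that the positive cones of a group are exactly its submonoids closed under conjugation.

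First I would treat nonempty meets. Let $\{P_i\}_{i\in I}$ be a nonempty family of compatible positive cones. The intersection $\bigcap_{i\in I}P_i$ is again a submonoid of $X\rtimes_\varphi B$ closed under conjugation, hence a positive cone; it satisfies $P_\prod\subseteq\bigcap_{i\in I}P_i$, since every $P_i$ contains $P_\prod$, and $\bigcap_{i\in I}P_i\subseteq P_{i_0}\subseteq P_\lex$ for any chosen $i_0\in I$. By the characterisation, $\bigcap_{i\in I}P_i$ is therefore compatible, i.e. lies in $\calP$, and it is plainly the infimum of $\{P_i\}_{i\in I}$ in $(\calP,\subseteq)$. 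Thus $\calP$ is closed under arbitrary nonempty intersections.

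Next comes the point where Theorem~\ref{th:lexic} is essential: the infimum of the empty subfamily is by definition the greatest element of $\calP$, and since we are assuming $\calP\neq\emptyset$, Theorem~\ref{th:lexic} guarantees that $P_\lex$ is compatible, so $P_\lex\in\calP$; by the characterisation it contains every member of $\calP$, hence it is that greatest element. Combining the two steps, every subfamily of $\calP$ has an infimum in $(\calP,\subseteq)$, so $\calP$ is a complete lattice; concretely, for $\mathcal S\subseteq\calP$ one gets $\bigvee\mathcal S=\bigcap\{Q\in\calP\mid P\subseteq Q\text{ for all }P\in\mathcal S\}$, the family on the right being nonempty because it contains $P_\lex$. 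In particular $\calP$ has a least element $\bigcap\calP$, which a short argument identifies with $\langle P_\prod\rangle$, the smallest positive cone of $X\rtimes_\varphi B$ containing the product cone.

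I do not expect a real obstacle here: the mathematical content sits entirely in the earlier Proposition and in Theorem~\ref{th:lexic}, and what remains is a lattice-theoretic formality. The only subtlety worth flagging is that $P_\prod$ itself need not be a positive cone of $X\rtimes_\varphi B$, so one cannot simply declare the bottom of $\calP$ to be the product order; organising the proof around ``closed under arbitrary nonempty intersections, with a top element'' avoids having to exhibit a least element by hand, while still letting one recognise it afterwards as $\langle P_\prod\rangle$.
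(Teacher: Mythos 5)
Your proof is correct and follows the same route as the paper, which simply notes that the meet (intersection) of compatible orders is again compatible and leaves the lattice-theoretic bookkeeping implicit. Your write-up fills in exactly those details — intersections of positive cones are positive cones sandwiched between $P_\prod$ and $P_\lex$, and $P_\lex$ serves as the top element by Theorem~\ref{th:lexic} — so there is nothing to add.
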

\begin{proof}
It is easily checked that the meet of compatible orders is a compatible order.
\end{proof}

Hence, if $\calP\neq\emptyset$, there is a least compatible order, which we call \emph{minimal} and describe next.

\begin{prop}\label{prop:prod}
Let $\calP\neq\emptyset$.
\begin{enumerate}
\item[\em (1)] The positive cone of the least compatible order for \eqref{eq:split} is $\langle P_\prod\rangle$, that is, the one generated by $P_X\times P_B$.
\item[\em (2)] Moreover, it coincides with $P_\prod$ if, and only if, $\varphi_b\sim\id$ for every positive element $b$ of $B$.
\end{enumerate}
\end{prop}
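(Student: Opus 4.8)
My plan is to derive both parts from the characterization established above — a positive cone $P$ of $X\rtimes_\varphi B$ is compatible in \eqref{eq:split} if and only if $P_\prod\subseteq P\subseteq P_\lex$ — together with the fact, from the Remark in Section~\ref{section first properties}, that $\langle P_\prod\rangle$ is the smallest positive cone of $X\rtimes_\varphi B$ containing $P_\prod$. For part (1): since $\calP\neq\emptyset$, fix a compatible cone $P$; then $P_\prod\subseteq P\subseteq P_\lex$, and minimality of $\langle P_\prod\rangle$ among positive cones containing $P_\prod$ forces $P_\prod\subseteq\langle P_\prod\rangle\subseteq P\subseteq P_\lex$, so $\langle P_\prod\rangle$ is compatible by the characterization; being contained in every compatible cone, it is the least one. (The hypothesis $\calP\neq\emptyset$ enters precisely to certify $\langle P_\prod\rangle\subseteq P_\lex$.)

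For part (2), I would first reduce the statement: by part (1), the least compatible cone equals $P_\prod$ exactly when $P_\prod$ is itself a compatible positive cone, and since $P_\prod\subseteq P_\lex$ always holds and $P_\prod=P_X\times P_B$ is already a submonoid, this amounts to $P_\prod$ being closed under conjugation in $X\rtimes_\varphi B$. I would then reuse the conjugation formula computed in the proof of Theorem~\ref{th:lexic},
\[(y,a)+(x,b)-(y,a)=\bigl(\,y+\varphi_a(x)-\varphi_{a+b-a}(y),\;a+b-a\,\bigr),\]
whose second coordinate stays in $P_B$ whenever $b\in P_B$ (as $P_B$ is conjugation-closed); so the whole point is whether $y+\varphi_a(x)-\varphi_{a+b-a}(y)\in P_X$ for all $x\in P_X$, $b\in P_B$, $a\in B$, $y\in X$. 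For the ``if'' direction: if $\varphi_c\sim\id$ for every positive $c$, then $c=a+b-a$ is positive, so $\varphi_c(y)\sim y$ and hence $y+\varphi_a(x)-\varphi_c(y)\sim y+\varphi_a(x)-y$, which is $\geq0$ since $\varphi_a$ is monotone (Theorem~\ref{th:lexic}) and fixes $0$. For the ``only if'' direction: specialising the formula to $x=0$, $a=0$, $b=c\in P_B$ gives $(y-\varphi_c(y),c)\in P_\prod$, i.e. $\varphi_c(y)\leq y$ for all $y\in X$; feeding in $-y$ and using that $\varphi_c$ is a homomorphism while inversion is anti-monotone yields $y\leq\varphi_c(y)$, hence $\varphi_c\sim\id$. (Since $\calP\neq\emptyset$ already gives $\varphi_c\sim\id$ for $c\sim0$ by Theorem~\ref{th:lexic}, the condition is the same whether ``positive'' is read as $\geq0$ or $>0$.)

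I do not expect a genuine obstacle: once the conjugation formula of Theorem~\ref{th:lexic} is available, both parts are essentially bookkeeping, and the conceptual input is entirely contained in the compatibility characterization and the Remark. The points that need care are the non-abelian arithmetic — invoking monotonicity of $+$ in each variable and anti-monotonicity of inversion at the right moments — the observation that the additive closure of a conjugation-closed set is again conjugation-closed (so that $\langle P_\prod\rangle$ is genuinely a positive cone, as the Remark asserts), and the check that taking $x=0$ in the converse of (2) is legitimate, which it is since $0\in P_X$.
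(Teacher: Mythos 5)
Your proof is correct and follows essentially the same route as the paper: part (1) is the same observation the paper dismisses as obvious, and for part (2) you arrive at the criterion ``$P_\prod$ is compatible iff $\varphi_b$ is comparable with $\id$ on all of $X$ for every $b\in P_B$'' followed by the same $-x$ trick to upgrade an inequality to $\sim$. The only difference is that the paper imports this criterion from \cite[Proposition 5.2]{CMFM}, whereas you rederive it from the conjugation formula in the proof of Theorem \ref{th:lexic}, which makes your argument self-contained but not conceptually different.
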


\begin{proof}
(1) is obvious. To show (2) we use \cite[Proposition 5.2]{CMFM}, which assures that $P_\prod$ is compatible if and only if $\varphi_b(x)\geq x$ for all $b\in P_B$ and $x\in X$. But this, together with $\varphi_b(-x)\geq -x$ gives $\varphi_b\sim\id$ as claimed.
\end{proof}

\begin{remark}
On one hand there are examples of \eqref{eq:split} with no compatible order, like for instance
\[\xymatrix{ (\ZZ,\NN) \ar[r]^{\langle 1,0\rangle} & \ZZ\rtimes \ZZ \ar@<-2pt>[r]_{\pi_2} & (\ZZ,\ZZ) \ar@<-2pt>[l]_{\langle 0,1\rangle}}\]
with $\varphi_b(x)=(-1)^bx$, since $\varphi_b$ is not monotone. On the other hand, Example 5.8 of \cite{CMFM} shows that there may be plenty of possible positive cones, even in the case when both $X$ and $B$ are abelian and have antisymmetric orders: it is shown there that, for the split extension
\[\xymatrix{ (\ZZ,\NN) \ar[r]^{\langle 1,0\rangle} & \ZZ\times \ZZ \ar@<-2pt>[r]_{\pi_2} & (\ZZ,\NN) \ar@<-2pt>[l]_{\langle 0,1\rangle}}\]
$\calP$ is an uncountable set. Indeed, a positive cone $P$ on $\ZZ\times\ZZ$ can be determined by a family of sets $(X_j=\{n\in\ZZ\,;\,(n,j)\in P\})_{j\in\ZZ}$, with $X_j=\emptyset$ if $j<0$ and $X_j=\uparrow -x_n\subseteq \ZZ$ for $j\geq 0$ so that $x_0=0$ and $(x_n)_{n\in\NN}$ is a sequence in $\NN\cup\{\infty\}$ such that $x_{n+m}\geq x_n+x_m$.
\end{remark}
 Next we present a characterization of compatible orders inspired by the latter example. We say that family of subsets $(X_b)_{b\in B}$ of $X$ is compatible if $P=\{(x,b)\,;\,b\in B, x\in X_b\}$ is a compatible positive cone.

\begin{prop}
A family $(X_b)_{b\in P_B}$ of subsets of $X$ is compatible if and only if it is satisfies the following conditions:
\begin{enumerate}
\item[\em (1)] $X_b\neq \emptyset\,\Leftrightarrow\,b\in P_B\,\Leftrightarrow\,0\in X_b$;
\item[\em (2)] $X_0=P_X$;
\item[\em (3)] $(\forall b,b'\in P_B)\;X_b+\varphi_b(X_{b'})\subseteq X_{b+b'}$;
\item[\em (4)] $(\forall a\in B)\,(\forall b\in P_B)\,(\forall x\in X)\;x+\varphi_a(X_b)\subseteq X_{a+b-a}+\varphi_{a+b-a}(x)$.
\end{enumerate}
\end{prop}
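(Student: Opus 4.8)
The plan is to unwind the definition of ``compatible positive cone'' into statements about the family $(X_b)_{b\in P_B}$. Recall that a positive cone is a submonoid closed under conjugation, and that, by the opening Proposition of this section, a positive cone $P$ in $X\rtimes_\varphi B$ is compatible precisely when $P_\prod\subseteq P\subseteq P_\lex$. So, writing $P=\{(x,b)\,;\,x\in X_b\}$ (with $X_b=\emptyset$ understood for $b\notin P_B$), I must check four things: that $P$ contains $0$ and is closed under $+$; that $P$ is closed under conjugation; that $P_X\times P_B\subseteq P$; and that $P\subseteq P_\lex$. Throughout I use that $P_B$ is itself a positive cone, hence closed under $+$ and under $b\mapsto a+b-a$, which is exactly what makes the indices $b+b'$ and $a+b-a$ occurring in (3) and (4) lie in $P_B$, so that $X_{b+b'}$ and $X_{a+b-a}$ are defined.

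\emph{From compatibility to} (1)--(4). Reading off the product $(x,b)+(x',b')=(x+\varphi_b(x'),b+b')$, closure of $P$ under addition is literally condition (3). Using the identity $(y,a)+(x,b)-(y,a)=(y+\varphi_a(x)-\varphi_{a+b-a}(y),a+b-a)$ already computed in the proof of Theorem~\ref{th:lexic}, closure of $P$ under conjugation says precisely that $y+\varphi_a(x)\in X_{a+b-a}+\varphi_{a+b-a}(y)$ for all $y\in X$, $a\in B$ and $x\in X_b$, i.e. condition (4) (up to renaming $y$ to $x$). From $P_\prod\subseteq P$, applied to the pairs $(x,b)$ with $x\in P_X$ and $b\in P_B$, we get $P_X\subseteq X_b$ for every $b\in P_B$; in particular $0\in X_b$ and $P_X\subseteq X_0$. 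From $P\subseteq P_\lex$: if $x\in X_b$ then $(x,b)\in P_\lex$ forces $b\geq 0$, so $X_b\neq\emptyset\Rightarrow b\in P_B$; and $x\in X_0$ forces $x\geq 0$, so $X_0\subseteq P_X$. Together these give $X_b\neq\emptyset\Leftrightarrow b\in P_B\Leftrightarrow 0\in X_b$ and $X_0=P_X$, that is (1) and (2).

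\emph{From} (1)--(4) \emph{to compatibility}. That $(0,0)\in P$ is (1) with $b=0$; closure of $P$ under $+$ is (3) together with closure of $P_B$ under $+$; closure of $P$ under conjugation is (4) together with the conjugation formula above and closure of $P_B$ under conjugation. For $P_\prod\subseteq P$: applying (3) to $0$ and $b$, with $\varphi_0=\id$, gives $X_0+X_b\subseteq X_b$, and since $0\in X_b$ we get $P_X=X_0=X_0+\{0\}\subseteq X_b$, so $(x,b)\in P$ for all $x\in P_X$, $b\in P_B$. For $P\subseteq P_\lex$: if $(x,b)\in P$ then $X_b\neq\emptyset$, so $b\in P_B$ by (1); if moreover $b\sim 0$ then $-b\in P_B$, and applying (3) to $b$ and $-b$, together with $0\in X_{-b}$ and $\varphi_b(0)=0$, yields $x=x+\varphi_b(0)\in X_b+\varphi_b(X_{-b})\subseteq X_0=P_X$, hence $x\geq 0$; in either case $(x,b)\in P_\lex$.

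The argument is essentially bookkeeping, and the only two spots needing a little care are the exact form of the conjugate of $(x,b)$ by $(y,a)$ in the semidirect product --- which dictates the somewhat awkward shape of (4), and which I simply import from the proof of Theorem~\ref{th:lexic} --- and, in the backward direction, the two applications of (3) in which one feeds in the neutral element and exploits $\varphi_b(0)=0$ and $\varphi_0=\id$ to extract $P_X\subseteq X_b$ and, when $b\sim 0$, $X_b\subseteq P_X$. I do not anticipate a genuine obstacle beyond keeping the variable names straight.
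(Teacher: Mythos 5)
Your proof is correct and takes essentially the same route as the paper's own (one-line) proof: conditions (3) and (4) encode closure of $P$ under addition and conjugation, while (1) and (2) encode the compatibility of the order. The only cosmetic difference is that you verify compatibility via the sandwich $P_{\prod}\subseteq P\subseteq P_{\lex}$ from the section's first Proposition, whereas the paper phrases it directly as monotonicity of $\pi_B$ and the kernel condition; your write-up carefully fills in the details the paper leaves implicit.
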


\begin{proof}
(1) and (2) guarantee that $\pi_B$ is monotone and the order of $X$ is inherited from the order of $X\rtimes_\varphi B$, while (3) and (4) are equivalent to closure of $P$ under addition and conjugation, respectively.
\end{proof}

\begin{remark}
Although the previous result is just a way of formulating the closure under addition and conjugation of $P$, it gives interesting information on the behaviour of $X_b$ under the action $\varphi_a$. Indeed, one concludes that, if $(X_b)_{b\in B}$ is compatible, then:
\[(\forall a\in B)\,(\forall b\in P_B)\;\varphi_a(X_b)=X_{a+b-a}.\]
This gives that, in particular, for all $b\in P_B$, $\varphi_b(X_b)=X_b$, and that, for conjugate positive elements $b,b'$ of $B$, $X_b$ is isomorphic to $X_{b'}$, via the action $\varphi$.
\end{remark}

\section{On $\SS$-protomodularity}

The existence of different compatible orders shows that the Split Short Five Lemma fails in $\OrdGrp$, and so this category is not protomodular. One may then ask whether $\OrdGrp$ is $\SS$-protomodular (cf. \cite[Definition 3.1]{BMMS}), for a suitable class $\SS$ of split extensions, which in this context are usually called \emph{points}, due to the fact that a split epimorphism $f\colon A\to B$ in a category $\CC$, together with its splitting $s\colon B\to A$, is nothing but a morphism from the terminal object $\id\colon B\to B$ into $f\colon A\to B$ in the slice category $\CC/B$ over $B$, which we will refer to as a \emph{point over $B$}. We will denote by $\Pt(B)$ the category of points $(f\colon A\to B,\,s\colon B\to A)$ over $B$ where a morphism $h\colon (f,s)\to (f',s')$ is a morphism $h\colon A\to A'$ in $\CC$ such that $f'\cdot h=f$ and $h\cdot s=s'$; $\Pt_\SS(B)$ is its full subcategory of points in $\SS$.

We recall that a split extension
\begin{equation}\label{eq:point}
\xymatrix{ X \ar[r]^k & A \ar@<-2pt>[r]_f & B \ar@<-2pt>[l]_s}
\end{equation}
or, equivalently, a point $(f,s)$ with kernel $k$, is \emph{strong} if $k$ and $s$ are jointly strongly epimorphic. It is \emph{stably strong} if every pullback of it along any morphism $g\colon C\to B$ is a strong point.

As in every protomodular category, in $\Grp$ every point is strong, hence also stably strong, but that is not the case in $\OrdGrp$. Below we identify the strong points in $\OrdGrp$. Also, one may wonder whether they are related to the points $(f,s)$ such that $(f,s)$ is a \emph{rali} ($f$ is \emph{r}ight \emph{a}djoint and \emph{l}eft \emph{i}nverse to $s$) with respect to the $\Ord$-enrichment of $\OrdGrp$ studied in \cite{CMR}: given two morphisms $g,h\colon X\to Y$ in $\OrdGrp$, $g\leq h$ if $g(x)\leq h(x)$ for every \emph{positive} element $x$ of $X$.

\begin{lemma}
Given a point \eqref{eq:point} in $\OrdGrp$, where we identify $A$ with $X\rtimes_\varphi B$ as usual, and $P$ is its positive cone,
\begin{enumerate}
\item[\em (1)] $(f,s)$ is a rali if, and only if, $P=P_\prod$;
\item[\em (2)] $(f,s)$ is strong if, and only if, $P$ is minimal (i.e. $P$ is generated by $P_\prod$).
\end{enumerate}
\end{lemma}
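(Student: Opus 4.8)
The plan is to prove the two equivalences by unwinding the relevant universal properties in $\OrdGrp$, using the $\Ord$-enrichment and the characterization of compatible positive cones already established. For part (1), I would start from the definition that $(f,s)$ is a rali, i.e.\ $f\dashv s$ with $f\cdot s=\id_B$, which means precisely that $s\cdot f\leq \id_A$ in the pointwise preorder on morphisms: for every positive element $(x,b)$ of $A=X\rtimes_\varphi B$ we must have $s\pi_B(x,b)=(0,b)\leq (x,b)$. Conversely, if this inequality holds for all positive $(x,b)$, then $s\cdot f\leq\id_A$ and (since $f$ is a split epi with $fs=\id$, and $\id_B\leq \id_B$ trivially gives the other triangle inequality) $(f,s)$ is a rali. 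So the content of (1) is: $P$ is compatible and satisfies $(0,b)\leq(x,b)$ for all $(x,b)\in P$ if and only if $P=P_\prod$. One direction is immediate since $(x,b)-(0,b)=(x-\varphi_b(0),\,0)=(x,0)$ must then lie in $P$, forcing $x\geq 0$, hence $P\subseteq P_X\times P_B=P_\prod$; combined with $P_\prod\subseteq P$ from compatibility (Proposition on compatible cones, (i)$\Rightarrow$(ii)) we get $P=P_\prod$. The other direction: if $P=P_\prod$ then for $(x,b)\in P$ we have $b\geq 0$ and $x\geq 0$, so $(0,b)\leq(x,b)$ holds because $(x,b)-(0,b)=(x,0)\in P_\prod$; and $P_\prod$ is compatible by hypothesis since it is the minimal compatible cone (Proposition~\ref{prop:prod}(1) combined with the assumption $\calP\neq\emptyset$ — note that if $(f,s)$ is a point at all then $\calP\neq\emptyset$). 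I should be a little careful here that "rali" presupposes $(f,s)$ is already a point in $\OrdGrp$, so $P$ is compatible throughout.

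For part (2), I would recall that $k$ and $s$ are jointly strongly epimorphic in $\OrdGrp$ exactly when they do not factor through any proper \emph{strong} subobject of $A$; since the forgetful functor to $\Grp$ preserves and reflects the underlying group structure and in $\Grp$ the pair $\langle1,0\rangle,\langle0,1\rangle$ is already jointly epic onto $X\rtimes_\varphi B$, the only possible obstruction in $\OrdGrp$ is the \emph{order}: a strong subobject of $(X\rtimes_\varphi B,P)$ through which both $k$ and $s$ factor is a subgroup equal to all of $X\times B$ but carrying a possibly smaller positive cone $P'\subseteq P$ which still makes both $\langle1,0\rangle$ and $\langle0,1\rangle$ monotone — i.e.\ $P'$ is a compatible cone contained in $P$. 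Joint strong epimorphicity then says no such $P'\subsetneq P$ exists, which is exactly the statement that $P$ is the least element of $\calP$, i.e.\ $P$ is minimal, i.e.\ $P=\langle P_\prod\rangle$ by Proposition~\ref{prop:prod}(1). Conversely if $P=\langle P_\prod\rangle$ then any compatible $P'\subseteq P$ contains $P_\prod$ hence contains $\langle P_\prod\rangle=P$, so $P'=P$ and the pair is jointly strongly epic.

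The main obstacle I anticipate is the precise identification in part (2) of what a "strong subobject through which $k$ and $s$ both factor" looks like in $\OrdGrp$ — one must argue that strong monomorphisms in $\OrdGrp$ are exactly the \emph{order-embeddings} onto subgroups (equivalently, the regular monos), so that a strong subobject of $A$ containing the images of $k$ and $s$ has underlying group all of $X\rtimes_\varphi B$ and the factorization question reduces purely to comparing positive cones. This uses that the forgetful functor $\OrdGrp\to\Grp$ is topological, so strong subobjects correspond to initial (embedding) structures on a fixed subgroup; granting that, the rest is the bookkeeping above. I would state this characterization of strong monos as a preliminary observation (or cite it from the topological-functor framework) and then the two equivalences follow cleanly.
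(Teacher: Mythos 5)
Part (1) is correct and is essentially the paper's argument: a rali is characterized by $s\cdot f\leq\id_A$ in the $\Ord$-enrichment (i.e.\ tested only on positive elements), which for $(x,b)\in P$ gives $(x,0)=(x,b)-(0,b)\in P$ and hence $x\geq 0$ because $k$ is an extremal monomorphism; together with $P_\prod\subseteq P$ from compatibility this yields $P=P_\prod$, and the converse is the same computation read backwards. (Minor slip: a rali satisfies $s\dashv f$, not $f\dashv s$; you state the correct counit inequality $s\cdot f\leq\id_A$ anyway, so nothing is affected.)

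Part (2), however, rests on a mischaracterization of ``jointly strongly epimorphic''. You test $(k,s)$ only against proper \emph{strong} subobjects, and you correctly observe that strong monomorphisms in $\OrdGrp$ are the order-embeddings onto subgroups. But then a strong subobject of $(X\rtimes_\varphi B,P)$ whose underlying subgroup is all of $X\times B$ necessarily carries the restricted cone, namely $P$ itself --- it cannot carry a ``possibly smaller positive cone $P'\subseteq P$'' as you assert. Taken literally, your criterion would make \emph{every} point strong, contradicting both the Lemma and the paper's later Proposition that not all points are strong. The correct test (and the one the paper uses) is against \emph{all} monomorphisms: since $\OrdGrp$ has pullbacks, $(k,s)$ is jointly strongly epimorphic iff every monomorphism through which both factor is an isomorphism. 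Because the pair is already jointly strongly epimorphic in $\Grp$, such a monomorphism is bijective, hence of the form $(X\rtimes_\varphi B,P')\to(X\rtimes_\varphi B,P)$ with $P'$ a positive cone satisfying $\langle P_\prod\rangle\subseteq P'\subseteq P$ (the lower bound coming from monotonicity of the factorizations of $k$ and $s$); all such maps are isomorphisms precisely when $P=\langle P_\prod\rangle$. Your subsequent bookkeeping with cones $P'$ is exactly this argument, so the proof is repaired by replacing ``proper strong subobject'' with ``non-invertible monomorphism'' (equivalently, a bijective morphism that is not an order-isomorphism) --- the point being that $\OrdGrp$ is not balanced, which is the whole content of the statement.
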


\begin{proof}
(1) Assume that $(f,s)$ is a rali, i.e. $f\cdot s=\id_B$ and $s\cdot f\leq \id_A$. Then, for every $(x,b)\in P$, $(0,b)= s(f(x,b))\leq (x,b)$, and therefore $(0,0)\leq (x,0)$, which is equivalent to $x\geq 0$ since $k$ is an extremal monomorphism.

(2) Since $(f,s)$ is always strong as a point in $\Grp$, we only have to show that, if $k$ and $s$ factor through a bijective morphism, then it is an isomorphism, and this is easily seen to be the case exactly when $P$ is minimal.
\end{proof}

\begin{remark}
As a side remark we mention that a point \eqref{eq:point} is a rali exactly when the split extension $\xymatrix{P_X\ar[r]^{\langle 1,0\rangle}&P\ar@<-2pt>[r]_{\pi_2}&P_B\ar@<-2pt>[l]_{\langle 0,1\rangle}}$ is a Schreier point (see \cite{Schreier book}) in the category of monoids.
\end{remark}

In \cite[Proposition 6.2]{CMFM} it is shown that $\OrdGrp$ is $\SS$-protomodular when $\SS$ is the class of split extensions with the product order, i.e. rali points. Here we analyse whether there is a larger class $\SS$ of points that makes $\OrdGrp$ an $\SS$-protomodular category. Denoting the classes of rali, strong and stably strong points by $\Rali$, $\Strong$ and $\Strong^*$ respectively, we know that
\[\Rali\subseteq \Strong^*\subseteq\Strong\hspace*{15mm}\mbox{ and that we must have } \hspace*{15mm}\Rali\subseteq \SS\subseteq\Strong^*.\]
First we show that the two inclusions on the left are strict.

\begin{prop}
\begin{enumerate}
\item[\em (1)] Strong points are not stable under pullback.
\item[\em (2)] There is a stably strong point which is not a rali.
\end{enumerate}
\end{prop}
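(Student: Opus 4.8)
The plan is to prove both parts by exhibiting explicit points in $\OrdGrp$ of the form $(\ZZ^n\rtimes_\varphi\ZZ,\,\pi_2,\,\langle0,1\rangle)$ with carefully chosen orders, and by computing the relevant positive cones directly. I will use repeatedly the Lemma above, which says that a point is strong exactly when its cone equals the least compatible cone $\langle P_\prod\rangle$ and is a rali exactly when its cone equals $P_\prod$, together with the explicit description of the pullback of a point: pulling back along a monotone $g\colon C\to B$ replaces $\varphi$ by the composite action $c\mapsto\varphi_{g(c)}$ and the positive cone $P$ by $P'=\{(x,c)\,;\,c\in P_C,\ (x,g(c))\in P\}$.

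For (1) I would take $X=(\ZZ,\{0\})$ with the discrete order, $B=(\ZZ,\NN)$, and $\varphi_b(x)=(-1)^bx$. Compatible orders exist by Theorem \ref{th:lexic} (the action is monotone and $0$ is the only element $\sim0$ in $B$), so there is a least one, $P=\langle P_X\times P_B\rangle$; closing $P_X\times P_B$ under conjugation and then addition gives $P=(\{0\}\times\NN)\cup(2\ZZ\times\NN_{\geq1})$, which is therefore a strong point. Pulling back along $g\colon(\ZZ,\NN)\to(\ZZ,\NN)$, $g(c)=2c$, the action becomes $x\mapsto(-1)^{2c}x=x$, the total object is the direct product $\ZZ\times\ZZ$, and the pullback cone is $\{(x,c)\,;\,c\geq0,\ (x,2c)\in P\}=(\{0\}\times\NN)\cup(2\ZZ\times\NN_{\geq1})$. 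This strictly contains $\{0\}\times\NN$, which is the least compatible cone for the trivial action, so the pullback is not strong and strong points are not stable under pullback.

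For (2) I would take $X=\ZZ^2$ with the lexicographic order, $P_X=\{(a,c)\,;\,c\geq1\text{ or }(c=0\text{ and }a\geq0)\}$, keep $B=(\ZZ,\NN)$, and use the shear action $\varphi_b(a,c)=(a+bc,c)$; it is monotone (it fixes the second coordinate) and $0$ is again the only element $\sim0$ in $B$, so there is a least compatible cone $P=\langle P_X\times P_B\rangle$. A short computation gives $P=\{((a,c),b)\,;\,b\geq0,\ c\geq0,\ \text{and }(b\geq1\text{ or }c\geq1\text{ or }a\geq0)\}$, which strictly contains $P_X\times\NN$ (e.g. $((-5,0),1)\in P$), so this strong point is not a rali. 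It remains to check it is stably strong. Fix a monotone $g\colon C\to(\ZZ,\NN)$ and let $d\geq0$ generate its image. If $d=0$ the pulled-back action is trivial and one checks the pullback cone is exactly $P_X\times P_C$, a rali; if $d\geq1$, then $g(w)\geq1\Leftrightarrow g(w)\neq0$ for $w\in P_C$, and the pullback cone is $P'=\{((a,c),w)\,;\,w\in P_C,\ c\geq0,\ (g(w)\neq0\text{ or }c\geq1\text{ or }a\geq0)\}$. The only elements of $P'$ not visibly in $P_X\times P_C$ are the $((a,0),w)$ with $a<0$, $w\in P_C$ and $g(w)\neq0$; and each such element is a single conjugate of $((a',0),w)\in P_X\times P_C$, since conjugating $((a',0),w)$ by $((0,q),0)$ in $\ZZ^2\rtimes_{\varphi g}C$ yields $((a'-g(w)q,0),w)$, and one may choose $q\in\ZZ$ and $a'\geq0$ with $a'-g(w)q=a$ because $g(w)>0$. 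Hence $P'=\langle P_X\times P_C\rangle$ and every pullback is strong.

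The step I expect to be the real obstacle is the stable-strongness verification in (2): it requires controlling the pullback cone along every morphism $C\to B$, including non-abelian or infinitely generated $C$, and checking that the elements of the minimal cone produced by conjugation inside the semidirect product are recovered after base change. What keeps this manageable is that the shear action only sees $w$ through $g(w)\in\ZZ$, so conjugation in $\ZZ^2\rtimes_{\varphi g}C$ touches the first coordinate of $X$ only through $g$ of the conjugating element and of $w$; combined with the fact that an element with vanishing second $X$-coordinate can only be assembled from elements of that form (this coordinate being additive and conjugation-invariant), the whole verification reduces to the one-line conjugation identity above.
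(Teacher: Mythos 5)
Your argument is correct. Part (1) is essentially the paper's own example: the paper takes the same point $(\ZZ,\{0\})\rtimes_\varphi(\ZZ,\NN)$ with $\varphi_b(x)=(-1)^bx$ and the minimal order, and pulls it back along $n\mapsto 2n$ (stated there in the slightly more general form $\varphi_b\not\geq\id$, $\varphi_b^m\geq\id$, pulling back along $n\mapsto nmb$); your explicit computation of the minimal cone as $(\{0\}\times\NN)\cup(2\ZZ\times\NN_{\geq 1})$ just makes concrete what the paper argues abstractly via $(-x+\varphi_b(x),mb)$ being a sum of positives. Part (2), however, uses a genuinely different example. The paper takes $X=\QQ$ with $\varphi_n(x)=2^nx$ and the minimal order, and the stable strongness rests on divisibility of $\QQ$: for $g(a)\neq 0$ the equation $r(1-2^{g(a)})=x$ is solvable in $\QQ$, so every positive $(x,a)$ with $g(a)\neq0$ is a single conjugate of $(0,a)$ and the pullback cone is generated by the product cone. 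Your example replaces this by $X=\ZZ^2$ with the lexicographic order and a shear action: here conjugation by $((0,q),0)$ shifts the first coordinate by integer multiples of $g(w)\geq 1$, and since one only needs to reach \emph{some} nonnegative representative $a'$ rather than a prescribed one, integer solvability suffices. The trade-off is that the paper's mechanism is a one-line computation but requires a divisible (hence non-finitely-generated) kernel, whereas yours works with a finitely generated kernel at the cost of a slightly longer case analysis on the image $d\ZZ$ of $g$ and an explicit description of the generated cone $P=\{((a,c),b)\,;\,b\geq0,\ c\geq0,\ (b\geq1\mbox{ or }c\geq1\mbox{ or }a\geq0)\}$; all the verifications you sketch (compatibility via Theorem 3.2, the conjugation formula in $\ZZ^2\rtimes_{\varphi g}C$, and the identification of the pullback cone) check out.
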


\begin{proof}
(1)
With $\ZZ_0$ and $\ZZ$  the group of integers, respectively with $P=\{0\}$ and with the usual order, and $\varphi\colon \ZZ_0\times\ZZ\to\ZZ$ defined by $\varphi_1(x)=-x$, consider the strong point
\[\xymatrix{\ZZ_0 \ar[r]^-{\langle 1,0\rangle} & \ZZ_0\rtimes_\varphi\ZZ \ar@<-2pt>[r]_-{\pi_2} & \ZZ; \ar@<-2pt>[l]_-{\langle 0,1\rangle}}\]
that is, $\ZZ_0\rtimes_\varphi\ZZ$ is equipped with the minimal order. This point is not stably strong, as we show next.\\

Indeed, for any strong point \eqref{eq:point} such that there exists $b\geq 0$ with $\varphi_b\not\geq\id$ but $\varphi_b^m\geq\id$ for some natural number $m\neq 0$ (here by $\varphi_b^m$ we mean $\varphi_b$ computed $m$ times), we may consider its pullback along $f\colon\ZZ\to B$ with $f(n)=n\, m\, b$:
\[\xymatrix{X\rtimes\ZZ\ar[r]\ar@<2pt>[d]^{\pi_2}&X\rtimes_\varphi B\ar@<2pt>[d]^{\pi_2}\\
\ZZ\ar@<2pt>[u]^{\langle 0,1\rangle}\ar[r]_f&B\ar@<2pt>[u]^{\langle 0,1\rangle}}\]
Then the action of the point on the left is given by $\varphi_b^m$, hence the product order is the minimal compatible order in $X\rtimes\ZZ$. However, this point is not ordered by the product order: for $x\in X$ such that $\varphi_b(x)\not\geq x$ one has, since $b\geq 0$,
\[(-x,0)+(0,b)+(x,0)+(0,(m-1)b)=(-x+\varphi_b(x), m\, b)\geq 0;\]
hence $(-x+\varphi_b(x),1)$ is positive in $X\rtimes \ZZ$ although $-x+\varphi_b(x)$ is not positive, by assumption.\\


(2) Consider now the point
\begin{equation}\label{eq:strong}
\xymatrix{\QQ \ar[r]^-{\langle 1,0\rangle} & \QQ\rtimes_\varphi\ZZ \ar@<-2pt>[r]_-{\pi_2} & \ZZ \ar@<-2pt>[l]_-{\langle 0,1\rangle}}
\end{equation}
with $\varphi_n(x)=2^nx$ (hence monotone for every $n\in\ZZ$), equipped with the minimal order, and let $g\colon A\to \ZZ$ be any morphism in $\OrdGrp$. In its pullback $\QQ\rtimes_\psi A$ as in the diagram
\[\xymatrix{\QQ\rtimes_\psi A\ar[r]\ar[d]&\QQ\rtimes_\varphi\ZZ\ar[d]\\
A\ar[r]_{g}&\ZZ}\]
the positive cone is given by $P=\{(x,a)\,;\,a\geq 0\mbox{ and }(x,g(a))\geq 0\}$, and the action $\psi$ is given by $\psi_a(x)=\varphi_{g(a)}(x)$. Let us check that $\QQ\rtimes_\psi A$ has the minimal order: if $(x,a)\geq 0$ and $g(a)=0$, then both $a$ and $x$ are positive; if $(x,a)\geq 0$ and $g(a)\neq 0$, then, for $r=\frac{x}{1-2^{g(a)}}$ one gets, by closure of $P$ under conjugation,
\[(r,0)+(0,a)-(r,0)=(r-\psi_a(r),a)=(r-\varphi_{g(a)}(r),a)=(x,a)\in P.\]
Therefore, every pullback of the point \eqref{eq:strong} is a strong point, as claimed.
\end{proof}

\begin{lemma}
The class $\Strong^*$ of stably strong points in $\OrdGrp$ is closed under finite products in the category of points.
\end{lemma}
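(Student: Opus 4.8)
The plan is to reduce the statement to a concrete computation about positive cones of semidirect products. Let $(f_1,s_1)$ and $(f_2,s_2)$ be stably strong points over $B_1$ and $B_2$ respectively, realized as $X_1\rtimes_{\varphi^1}B_1$ and $X_2\rtimes_{\varphi^2}B_2$ with positive cones $P_1$, $P_2$. Their product in the category of points is the point over $B_1\times B_2$ whose middle object is $(X_1\rtimes_{\varphi^1}B_1)\times(X_2\rtimes_{\varphi^2}B_2)$, which as a group is $(X_1\times X_2)\rtimes_{\varphi}(B_1\times B_2)$ with $\varphi_{(b_1,b_2)}=\varphi^1_{b_1}\times\varphi^2_{b_2}$ and positive cone $P_1\times P_2$ (under the identification of underlying sets $X_1\times B_1\times X_2\times B_2\cong (X_1\times X_2)\times(B_1\times B_2)$). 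So I must show: if $P_1$ and $P_2$ are stably-strong positive cones, then $P_1\times P_2$ is a stably-strong positive cone.

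**First I would** record the behaviour of pullbacks, since a morphism $g\colon C\to B_1\times B_2$ is a pair $(g_1,g_2)$ with $g_i\colon C\to B_i$, and the pullback of the product point along $g$ is canonically the pullback of $(f_1,s_1)$ along $g_1$ times (over $C$... more precisely, pulled back to $C$) the pullback of $(f_2,s_2)$ along $g_2$. Concretely, pulling back $X_i\rtimes_{\varphi^i}B_i$ along $g_i\colon C\to B_i$ gives $X_i\rtimes_{\psi^i}C$ with $\psi^i_c=\varphi^i_{g_i(c)}$ and positive cone $Q_i=\{(x,c)\,;\,c\geq 0,\ (x,g_i(c))\in P_i\}$; and the pullback of the product point along $(g_1,g_2)$ is $(X_1\times X_2)\rtimes_\psi C$ with $\psi_c=\psi^1_c\times\psi^2_c$ and positive cone exactly $\{(x_1,x_2,c)\,;\,c\geq 0,\ (x_i,g_i(c))\in P_i\ (i=1,2)\}$. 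This last set is *not* literally $Q_1\times Q_2$ (which lives over $C\times C$), but it is the "pullback along the diagonal" of $Q_1\times Q_2$, and the point it defines over $C$ is strong if and only if this positive cone is minimal, i.e. generated by $P_X\times P_C$ where $P_X=P_{X_1}\times P_{X_2}$ and $P_C$ is the positive cone of $C$ (Lemma on strong points, part (2)).

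**The heart of the argument** is therefore: given that each $Q_i$ is minimal (which is exactly the hypothesis that each original point is stably strong, applied to $g_i$), show that the cone $Q:=\{(x_1,x_2,c)\,;\,c\geq 0,\ (x_i,g_i(c))\in P_i\}$ on $(X_1\times X_2)\rtimes_\psi C$ is minimal. Take $(x_1,x_2,c)\in Q$; I must write it as a finite sum of conjugates of elements of $P_{X_1}\times P_{X_2}\times\{0\}$ and $\{0\}\times\{0\}\times P_C$. Since $(x_1,c)\in Q_1$ and $Q_1$ is minimal, there is an expression of $(x_1,c)$ in $X_1\rtimes_{\psi^1}C$ as such a sum; similarly for $(x_2,c)\in Q_2$. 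The obstacle — and the one genuinely delicate point — is that these two expressions use the *same* $c$-component but a priori combine the $C$-coordinates differently, so I cannot just concatenate them. I would handle this by first peeling off the $C$-part: write $(x_1,x_2,c)=(x_1,x_2,0)+(0,0,c)$ is *not* valid in general (the order need not be the product order), but $(0,0,c)\in Q$ whenever $c\geq 0$ lies in the sub-semidirect-product structure, and more usefully, by the Remark following Proposition~3.9 (or by a direct conjugation argument as in the proof of Proposition~4.7(2)), elements of $Q$ projecting to a fixed $c$ form a coset $(x_1^0,x_2^0,c)+(\text{fibre over }0)$ once one such element exists; combining with minimality of each $Q_i$ fibrewise lets me reduce to $c\sim 0$, where $Q$-membership forces $(x_i,g_i(c))\in P_i$ with $g_i(c)\sim 0$, hence (by the strong-point/minimality description of $P_i$ restricted to the kernel, which gives $x_i\in P_{X_i}$ when the $B$-coordinate is $\sim 0$) $x_i\geq 0$, so $(x_1,x_2,c)$ is visibly in the generated cone. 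Assembling: $Q$ is minimal for every $g$, so the product point is stably strong. I expect the fibrewise-reduction step — making precise that minimality of $Q_1$ and $Q_2$ can be applied "coherently in $c$" — to be the main technical obstacle, but it is routine given the explicit cone descriptions already in hand.
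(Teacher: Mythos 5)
Your setup is correct and agrees with what the paper's (very terse) argument also reduces to: the pullback of the product point along $g=(g_1,g_2)\colon C\to B_1\times B_2$ is the fibred product over $C$ of the pullbacks $g_i^*$ of the factors, realized as $(X_1\times X_2)\rtimes_\psi C$ with positive cone $Q=\{(x_1,x_2,c)\,;\,(x_1,c)\in Q_1,\ (x_2,c)\in Q_2\}$, and everything hinges on showing that $Q$ is generated by $P_\prod$ when each $Q_i$ is. You also correctly isolate the genuine difficulty: the two decompositions of $(x_1,c)$ and $(x_2,c)$ as sums of conjugates of generators need not distribute the $C$-coordinate in the same way, so they cannot be concatenated. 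The problem is that your proposed way around this does not work. The claim that the elements of a compatible cone lying over a fixed $c$ form a coset of the fibre over $0$ is false in general (for the lexicographic order the fibre over $c>0$ is all of $X$, which is not a coset of $P_X$); the Remark after Proposition 3.9 only gives $\varphi_a(X_b)=X_{a+b-a}$, not a coset structure. And no mechanism is offered for the announced ``reduction to $c\sim 0$'': the case $c\sim 0$ is indeed immediate (there $x_i\in P_{X_i}$ and $(x_1,x_2,c)=(x_1,x_2,0)+(0,0,c)$), but the case $c>0$ is exactly where the alignment problem lives, and it is left unresolved. Note also that you only invoke strongness of the single pullbacks $g_i^*$ of the factors; that appears to be genuinely insufficient.

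A way to close the gap uses the full strength of \emph{stable} strongness: for $(x_i,c)\in Q_i$ with $c\geq 0$, pull the point $g_i^*(f_i,s_i)$ back further along $h\colon\ZZ\to C$, $n\mapsto nc$. In the resulting strong point over $\ZZ$ the element $(x_i,1)$ is positive, and since every generator has nonnegative $\ZZ$-component summing to $1$, its decomposition contains \emph{exactly one} conjugate of $(0,1)$; collapsing the kernel terms yields the normal form $x_i=a_i+\bigl(y_i-\psi^i_c(y_i)\bigr)+\psi^i_c(b_i)$ with $a_i,b_i\in P_{X_i}$ and $y_i\in X_i$. These normal forms for $i=1,2$ \emph{can} be paired, giving
$(x_1,x_2,c)=(a_1,a_2,0)+\bigl[\,((y_1,y_2),0)+((0,0),c)-((y_1,y_2),0)\,\bigr]+(b_1,b_2,0)$,
which exhibits $(x_1,x_2,c)$ in the cone generated by $P_\prod$. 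Without some such device your ``fibrewise'' step is not routine but is the entire content of the lemma; as written, the proposal has a genuine gap there. (For comparison, the paper disposes of the whole statement with the categorical one-liner that binary products commute with pullbacks, leaving the fibred-product-of-strong-points step implicit.)
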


\begin{proof}
$\Strong^*$ contains the terminal object and is closed under binary products because they commute with pullbacks.
\end{proof}

\begin{theorem}
Let $\SS=\Strong^*$ be the class of stably strong points in $\OrdGrp$. Then:
\begin{enumerate}
\item[\em (1)] The Split Short Five Lemma holds with respect to $\SS$; that is, for any commutative diagram
\begin{equation}\label{eq:SSFL}
\xymatrix{0\ar[r]&X\ar[r]^k\ar[d]^a&A\ar@<-2pt>[r]_f\ar[d]^b&B\ar@<-2pt>[l]_s\ar[d]^c\ar[r]&0\\
0\ar[r]&X'\ar[r]_{k'}&A'\ar@<-2pt>[r]_{f'}&B'\ar@<-2pt>[l]_{s'}\ar[r]&0}
\end{equation}
in the sense that $b\cdot k=k'\cdot a$, $c\cdot f=f'\cdot b$ and $b\cdot s=s'\cdot c$, if the rows are split extensions belonging to $\SS$ and $a$ and $c$ are isomorphisms, then $b$ is an isomorphism as well.
\item[\em (2)] For any morphism $h\colon Y \to B$, the change of base functor $h^*\colon \Pt_\SS(B)\to \Pt_\SS(Y)$ is conservative.
\end{enumerate}
\end{theorem}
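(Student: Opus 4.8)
The plan is to prove (1) by combining the Split Short Five Lemma in $\Grp$ with the characterization of strong points recalled above, and then to deduce (2) from (1) by a formal argument involving the kernel functor.

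For (1), I would apply the faithful forgetful functor $\OrdGrp\to\Grp$ to the diagram \eqref{eq:SSFL}: one obtains a morphism of split extensions of groups in which $a$ and $c$ are bijective, so the Split Short Five Lemma in the protomodular category $\Grp$ shows that $b$ is a bijective group homomorphism, hence a monomorphism in $\OrdGrp$. Since $a$ and $c$ are isomorphisms, the identities $b\cdot k=k'\cdot a$ and $b\cdot s=s'\cdot c$ rewrite as $k'=b\cdot(k\cdot a^{-1})$ and $s'=b\cdot(s\cdot c^{-1})$, so both $k'$ and $s'$ factor through $b$ in $\OrdGrp$. As the lower row is a point of $\SS\subseteq\Strong$, the pair $(k',s')$ is jointly strongly epimorphic, and therefore the monomorphism $b$, through which both factor, must be an isomorphism. (Only strongness of the codomain point is used here.)

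For (2), I would first note that $h^*$ does land in $\Pt_\SS(Y)$: pullbacks of split extensions along $h$ exist in $\OrdGrp$ (the forgetful functor to $\Grp$ lifts limits) and are again split extensions, and a pullback of a stably strong point is stably strong by the pasting lemma for pullbacks. Since $\OrdGrp$ is pointed, with the trivial group as zero object, part (1), applied to morphisms of $\Pt_\SS(B)$ (that is, with $c=\id_B$ and $a=\ker b$), says exactly that the kernel functor $\ker\colon\Pt_\SS(B)\to\OrdGrp$ — which is the change of base along the unique morphism $0\to B$ — is conservative, for every object $B$. Now for an arbitrary $h\colon Y\to B$ there is a canonical isomorphism $\ker\circ h^*\cong\ker$ (the kernel of a point is unchanged by pullback along $h$), so if $h^*(u)$ is an isomorphism in $\Pt_\SS(Y)$ then $\ker(u)\cong\ker(h^*(u))$ is an isomorphism, and conservativity of $\ker$ on $\Pt_\SS(B)$ forces $u$ to be an isomorphism; hence $h^*$ is conservative.

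The one genuinely non-formal step, and the crux of the theorem, is the passage inside (1) from "$b$ bijective" to "$b$ an isomorphism of $\OrdGrp$": a bijective morphism of ordered groups need not be invertible there, its domain possibly carrying a strictly coarser order, and it is precisely strongness of the bottom point that forbids $k'$ and $s'$ from both factoring through such a proper refinement of $A'$. Everything else — existence and pasting of pullbacks in $\OrdGrp$, the identification of the kernel functor with a change of base, and the reduction of an arbitrary change of base to that functor via $0\to Y\to B$ — is routine.
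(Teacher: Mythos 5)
Your proposal is correct, and part (1) takes a genuinely (if mildly) different route from the paper's. The paper's own proof goes through its Lemma identifying strong points as those whose positive cone is minimal, i.e.\ generated by $P_X\times P_B$: after the Split Short Five Lemma in $\Grp$ makes $b$ a group isomorphism, the fact that $a$ and $c$ are order isomorphisms makes the two minimal cones correspond under $b$, so $b$ is an order isomorphism. You instead argue straight from the definition of a strong point: $b$ is a monomorphism of $\OrdGrp$ (the forgetful functor to $\Grp$ is topological, hence faithful, so it reflects monomorphisms), both $k'$ and $s'$ factor through $b$, and joint strong epimorphicity of $(k',s')$ forces the monomorphism $b$ to be invertible. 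Your version is more abstract, avoids the minimality characterization entirely, and isolates the useful observation that only strongness of the \emph{codomain} point is actually used in (1) --- the full ``stably strong'' hypothesis is needed only to make $\SS$ pullback-stable, which is what (2) requires. For part (2) the paper merely gestures at ``adapting the classical proof''; your reduction via the kernel functor (change of base along $0\to B$), the natural isomorphism between the kernel functor and its composite with $h^*$, and conservativity of the kernel functor obtained from (1) with $c=\id_B$, is precisely that classical argument, correctly spelled out.
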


\begin{proof}
(1) Given diagram \eqref{eq:SSFL}, we know that $b$ is an isomorphism of groups, because the Split Short Five Lemma holds in $\Grp$. Since both orders are minimal, $b$ is in fact an isomorphism in $\OrdGrp$.\\

(2) Adapting the classical proof that the change of base functor between points is conservative provided that the Split Short Five Lemma holds, it is enough to observe that the change of base functor between points restricts to $\Pt_\SS$ since $\SS$ is pullback stable.
\end{proof}

It is an open problem to know whether $\Strong^*$ is stable under equalizers in the category of points, and consequently whether $\OrdGrp$ is $\Strong^*$-protomodular in the sense of \cite[Definition 8.1.1]{Schreier book}, \cite[Definition 3.1]{BMMS}, although the previous Theorem shows that $\OrdGrp$ has the desired properties for relative protomodularity with respect to $\Strong^*$. Moreover, $\OrdGrp$ is $\Strong^*$-protomodular in the sense of \cite[Definition 8.5]{B15}, where the author only imposes that $\SS$ is a stable class of strong points, and therefore this weaker notion does not assure that the change of base functor, restricted to $\SS$, is conservative (hence neither the Split Short Five Lemma). To assure that the change of base functor, restricted to $\Pt_\SS$, is conservative, the authors of \cite{BMMS} impose that $\SS$ is stable under equalizers in the category of points. However this property seems quite complicated to check, as it is the case of our example and, for instance, of the class of points studied in \cite[7.14]{MRVdL}.

Henceforth, we propose the following definition, which is in fact a translation of the notion of (absolute) protomodularity \cite[Definition 3.1.3]{BBbook}, not making use of equalizers but instead focussing on the key property of protomodularity.

\begin{definition}
If $\SS$ is a class of strong points of the category $\CC$, $\CC$ is said to be \emph{protomodular with respect to $\SS$} if
\begin{enumerate}
\item $\CC$ has pullbacks of points in $\SS$ along any morphism, which belong also to $\SS$.
\item For any morphism $h\colon Y\to B$, the change of base functor $h^*\colon\Pt_\SS(B)\to\Pt_\SS(Y)$ reflects isomorphisms.
\end{enumerate}
\end{definition}

Then we can compare these notions using Proposition 3.2 of \cite{BMMS}:

\begin{theorem}
If $\CC$ is $\SS$-protomodular in the sense of \cite{BMMS}, then the change of base functor is conservative when restricted to $\Pt_\SS$, and so $\CC$ is protomodular with respect to $\SS$.
\end{theorem}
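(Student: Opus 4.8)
The plan is to reduce the statement to the two defining clauses of \emph{protomodular with respect to $\SS$} and dispatch each using material already available. For clause (1), pullback stability of points in $\SS$ is part of the hypothesis that $\CC$ is $\SS$-protomodular in the sense of \cite{BMMS}; I would simply cite \cite[Definition 3.1]{BMMS}, where the requirement that $\SS$ be a pullback-stable class of strong points is built into the definition. So (1) is immediate and the entire content of the theorem lies in clause (2), namely that for every $h\colon Y\to B$ the change of base functor $h^*\colon\Pt_\SS(B)\to\Pt_\SS(Y)$ reflects isomorphisms.

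For clause (2), the first step is to invoke \cite[Proposition 3.2]{BMMS}, which is exactly the assertion that in an $\SS$-protomodular category (in the sense of \cite{BMMS}) the change of base functor, once restricted to the subcategories $\Pt_\SS$, is conservative --- i.e. it reflects isomorphisms. Since a conservative functor is by definition one that reflects isomorphisms, clause (2) of our definition holds verbatim. I would state this explicitly: given a morphism $h\colon Y\to B$ and a morphism $\psi$ in $\Pt_\SS(B)$ such that $h^*(\psi)$ is an isomorphism, \cite[Proposition 3.2]{BMMS} guarantees $\psi$ is an isomorphism; combined with (1) this is precisely what our definition demands.

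There is essentially no calculation involved; the theorem is a dictionary-translation result whose proof amounts to matching the two items of Definition~\ref{def:relprotomod} against the definition in \cite{BMMS} and its Proposition~3.2. The only point requiring a word of care --- and the closest thing to an obstacle --- is making sure that the notion of ``conservative'' used in \cite{BMMS} (reflecting isomorphisms) coincides with the property we ask for, and that the class $\SS$ consisting of strong points is the same on both sides; since \cite{BMMS} assumes $\SS$ is a stable class of strong points closed under equalizers in the category of points, our class $\SS$ is in particular such a class, so all hypotheses of \cite[Proposition 3.2]{BMMS} are met. I would close by remarking that the converse fails precisely because protomodularity with respect to $\SS$ does not require closure of $\SS$ under equalizers of points, which is why our notion is (possibly) strictly weaker, consistent with the discussion preceding the definition.
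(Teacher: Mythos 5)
Your proposal is correct and takes essentially the same route as the paper: the paper offers no separate proof beyond the preceding remark that the comparison follows from Proposition 3.2 of \cite{BMMS}, which is exactly the reduction you carry out (pullback stability from \cite[Definition 3.1]{BMMS} for clause (1), conservativity from \cite[Proposition 3.2]{BMMS} for clause (2)). Your closing observation about why the converse may fail matches the paper's surrounding discussion as well.
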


\section{On the existence of $\SS$-classifiers}

Unlike the category $\TopGrp$ of topological groups (see \cite{CC}), $\OrdGrp$ has no split extension classifiers, as we show in Theorem \ref{th:noclass}.
Still, it is interesting to analyse the existence, in $\OrdGrp$, of split extension classifiers for \emph{special classes of points}, as we discuss in this last section.

Given a class $\SS$ of split extensions in $\OrdGrp$, we denote by $\SS_X$ the category of split extensions in $\SS$ with kernel $X$ with morphisms triples $(a,b,c)$ as in \eqref{eq:morph} with $a=\id$.

\begin{definition}
If $\SS$ is a class of split extensions in $\OrdGrp$, we say that \emph{$\OrdGrp$ has $\SS$-classifiers} if the category $\SS_X$ has a terminal object; that is, for every ordered group $X$ there exists a split extension with kernel $X$
\begin{equation}\label{eq:Sclass}
\xymatrix{X\ar[r]^-{\langle 1,0\rangle}&X\rtimes A(X)\ar@<-2pt>[r]_-{\pi_2}&A(X)\ar@<-2pt>[l]_-{\langle 0,1\rangle}}
\end{equation}
in $\SS$ such that, for each split extension in $\SS$ with kernel $X$ there exists exactly one morphism in $\SS_X$ from it into \eqref{eq:Sclass}.
\end{definition}

 Given an ordered group $X$, let $\Aut_P(X)$ be the group \[\Aut(X)=\{\alpha\colon X\to X\,;\,\alpha\mbox{ is a monotone automorphism}\}\]
  equipped with an order with positive cone $P$. Then, by Theorem \ref{th:lexic}, there is an order in $X\rtimes\Aut(X)$ making
\begin{equation}\label{eq:Aut}
\xymatrix{X\ar[r]^-{\langle 1,0\rangle}&X\rtimes \Aut_P(X)\ar@<-2pt>[r]_-{\pi_2}&\Aut_P(X)\ar@<-2pt>[l]_-{\langle 0,1\rangle}}
\end{equation} a split extension if, and only if, when $\alpha\sim\id$ in $\Aut_P(X)$ also $\alpha\sim\id$ pointwise in $X$. We call such orders in $\Aut(X)$ \emph{admissible}.

\begin{prop}
Let $\SS$ be a class of split extensions, $X$ an ordered group and $P$ an admissible positive cone in $\Aut(X)$ such that \eqref{eq:Aut} belongs to $\SS$. If
\[\xymatrix{X\ar[r]^-{\langle 1,0\rangle}&X\rtimes A(X)\ar@<-2pt>[r]_-{\pi_2}&A(X)\ar@<-2pt>[l]_-{\langle 0,1\rangle}}\]
is a classifier for $\SS_X$, then $A(X)$ is isomorphic, as a group, to $\Aut(X)$, and its positive cone $P_A$ contains $P$.
\end{prop}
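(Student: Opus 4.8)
The plan is to identify $A(X)$ as a group with $\Aut(X)$ via the classifying homomorphism of the classifier itself, and then to read off $P\subseteq P_A$ from the comparison morphism coming out of \eqref{eq:Aut}.

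\emph{The classifying homomorphism and a section.} Since \eqref{eq:Sclass} is a split extension in $\OrdGrp$ with kernel $X$, its action $\varphi^A$ is valued in the monotone automorphisms of $X$ (Theorem \ref{th:lexic}, noting also that $(\varphi^A_a)^{-1}=\varphi^A_{-a}$ is monotone), so we get a group homomorphism $\rho\colon A(X)\to\Aut(X)$, $\rho(a)=\varphi^A_a$. By hypothesis \eqref{eq:Aut} lies in $\SS_X$, so terminality of \eqref{eq:Sclass} gives a unique morphism $(\id_X,b_0,c_0)$ from \eqref{eq:Aut} to it. As in the $\Grp$ computation, $b_0(x,\alpha)=(x,c_0(\alpha))$, and $b_0$ being a group homomorphism — recall that the semidirect product in \eqref{eq:Aut} uses the evaluation action $\varphi_\alpha(x)=\alpha(x)$ — forces $\varphi^A_{c_0(\alpha)}=\alpha$, i.e.\ $\rho\circ c_0=\id_{\Aut(X)}$. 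Hence $\rho$ is surjective, $c_0$ is an injective monotone group homomorphism, and $c_0(P)\subseteq P_A$.

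\emph{Injectivity of $\rho$.} It remains to show $\rho$ is injective; then $\rho$ is a group isomorphism, $c_0\rho=\id_{A(X)}$ (since $\rho(c_0\rho(a))=\rho(a)$), so $c_0=\rho^{-1}$, and transporting $P$ along $\rho$ turns $c_0(P)\subseteq P_A$ into the two assertions. For injectivity, let $a_0\in\ker\rho$. As $\varphi^A_{a_0}=\id_X$, conjugation by $(0,a_0)$ in $X\rtimes A(X)$ is the map $(x,a)\mapsto(x,a_0+a-a_0)$; paired with conjugation by $a_0$ in $A(X)$ it is an endomorphism of \eqref{eq:Sclass} in $\OrdGrp$ (conjugation by a fixed element preserves positive cones). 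By terminality it is the identity, so $a_0$ is central; thus $N:=\ker\rho$ is a central, hence abelian, subgroup of $A(X)$. Pulling \eqref{eq:Sclass} back along $N\hookrightarrow A(X)$ produces an object of $\SS_X$ with base $N$ on which $N$ acts trivially; to it the canonical comparison morphism and the morphism doubling the $N$-coordinate, $(x,n)\mapsto(x,n+n)$ over $n\mapsto n+n$, are both morphisms in $\SS_X$ — monotonicity of the doubled map uses only that $\pi_2$ and $\langle 0,1\rangle$ in \eqref{eq:Sclass} are monotone. These two morphisms coincide iff $N=0$, so terminality forces $\ker\rho=0$.

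\emph{The main obstacle.} I expect the delicate point to be upgrading ``$\ker\rho$ central'' to ``$\ker\rho$ trivial'': centrality drops out of the conjugation trick, but the second comparison morphism needs the pullback of \eqref{eq:Sclass} along $N\hookrightarrow A(X)$ to lie again in $\SS$ — automatic for the pullback-stable classes of interest ($\Rali$, $\Strong^*$), but requiring care for a wholly arbitrary $\SS$. Everything else is routine bookkeeping with semidirect products and morphisms of split extensions.
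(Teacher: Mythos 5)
Your first half is exactly the paper's argument: your $\rho$ is the paper's $\overline{\psi}$ (the $\Grp$-classifying homomorphism $\psi\colon A(X)\to\AUT(X)$ factored through the monotone automorphisms), your $c_0$ is the paper's $\gamma$, and $\rho\cdot c_0=\id_{\Aut(X)}$ together with monotonicity of $c_0$ gives $P\subseteq P_A$ just as in the text. The divergence is in proving that $\rho$ is injective. The paper disposes of this with a single ``clearly $\gamma\cdot\overline{\psi}=\id$'', which, unpacked, would require $(\id_X,\,1\times(\gamma\overline{\psi}),\,\gamma\overline{\psi})$ to be an endomorphism of \eqref{eq:Sclass} \emph{in} $\OrdGrp$; the monotonicity of $\gamma\overline{\psi}$ is not obvious, since nothing in the hypotheses relates $P_A$ back to $P$ in that direction. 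You instead give a two-step argument: centrality of $N=\ker\rho$ via the conjugation endomorphism and terminality (this step is complete, correct, and uses nothing about $\SS$ beyond terminality of the classifier), followed by the pullback-and-doubling comparison to kill $N$.

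The gap you flag in that last step is genuine: the proposition places no hypothesis on $\SS$ other than \eqref{eq:Aut} $\in\SS$, so there is no reason for the pullback of \eqref{eq:Sclass} along $N\hookrightarrow A(X)$ to lie in $\SS$, and without that the two comparison morphisms cannot be played off against terminality of the classifier. Nor can one easily dodge the pullback by doubling on all of $A(X)$: after your centrality step one has $A(X)\cong N\times\Aut(X)$, and the group-level endomorphism $(n,\alpha)\mapsto(2n,\alpha)$ of the classifier always exists when $N\neq 0$, but its monotonicity would require $(x,(n,\alpha))\geq 0\Rightarrow(0,(n,0))\geq 0$, which is precisely what restricting to the fibre over $N$ buys you and what fails to follow in general. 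So either $\SS$ must be assumed closed under this pullback (true for $\Rali$ and $\Strong^*$, the classes the paper actually uses later), or a different argument is needed; the paper's own proof does not supply one. In short: where your proof overlaps with the paper's it is correct and more explicit; where it goes beyond, it correctly isolates the one delicate point, and the remaining gap is one the published proof shares without acknowledging it.
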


\begin{proof}
By definition of classifier, there exists a unique morphism $\gamma$ making the following diagram commute
\[\xymatrix{X\ar@{=}[d]\ar[r]^-{\langle 1,0\rangle}& X\rtimes \Aut_P(X)\ar[d]\ar@<-2pt>[r]_-{\pi_2}& \Aut_P(X)\ar[d]^\gamma\ar@<-2pt>[l]_-{\langle 0,1\rangle}\\
X\ar[r]^-{\langle 1,0\rangle}&X\rtimes A(X)\ar@<-2pt>[r]_-{\pi_2}&A(X).\ar@<-2pt>[l]_-{\langle 0,1\rangle}}\]
On the other hand, being a split extension in $\Grp$, there is a group homomorphism $\psi\colon A(X)\to\AUT(X)$ as in diagram \eqref{eq:AutX}. Since we are in $\OrdGrp$, the image of $\psi$ has only monotone automorphisms, i.e. $\psi$ factors through $\Aut(X)$ as $\overline{\psi}\colon A(X)\to \Aut(X)$. Clearly $\overline{\psi}\cdot\gamma=\id$ and $\gamma\cdot\overline{\psi}=\id$, hence we may assume that $A(X)$, as a group, is $\Aut(X)$; moreover, from the monotonicity of $\gamma$ it follows that its positive cone $P_A$ contains the positive cone $P$ of $\Aut_P(X)$.
\end{proof}


\begin{theorem}
For every ordered group $X$, let $\widetilde{P}=\{\alpha\,;\,\alpha(x)\sim x \mbox{ for all }x\in X\}$. The rali point
\begin{equation}\label{eq:*}
\xymatrix{X\ar[r]^-{\langle 1,0\rangle}&X\rtimes \Aut_{\widetilde{P}}(X)\ar@<-2pt>[r]_-{\pi_2}&\Aut_{\widetilde{P}}(X).\ar@<-2pt>[l]_-{\langle 0,1\rangle}}
\end{equation}
is a terminal object of $\Rali_X$.
\end{theorem}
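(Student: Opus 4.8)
The plan is to verify that \eqref{eq:*} is indeed a rali point and then establish its universal property among rali points with kernel $X$. First I would check that $\widetilde{P}$ is an admissible positive cone on $\Aut(X)$: it is clearly a submonoid closed under conjugation (if $\alpha(x)\sim x$ and $\beta(x)\sim x$ for all $x$, then $\gamma\alpha\gamma^{-1}(x)\sim x$ using monotonicity of $\gamma,\gamma^{-1}$ and the fact that $\sim$ is preserved), and by its very definition, $\alpha\sim\id$ in $\Aut_{\widetilde P}(X)$ forces $\alpha\sim\id$ pointwise — so admissibility holds. By Theorem \ref{th:lexic} there is a compatible order on $X\rtimes\Aut_{\widetilde P}(X)$; I claim the one making \eqref{eq:*} a rali is the product order, which by the Lemma is equivalent to $\varphi_\alpha\sim\id$ pointwise for every $\alpha\in\widetilde P$ — and that is exactly the definition of $\widetilde P$. (I should invoke Proposition \ref{prop:prod}(2) here: the minimal order coincides with $P_\prod$ precisely when $\varphi_b\sim\id$ for all positive $b$.) So \eqref{eq:*} is a rali point, i.e. $X\rtimes\Aut_{\widetilde P}(X)$ carries the product order $P_X\times\widetilde P$.

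Next I would set up the universal morphism. Let
\[\xymatrix{X\ar[r]^-{\langle 1,0\rangle}&X\rtimes_\varphi(B,P_B)\ar@<-2pt>[r]_-{\pi_2}&(B,P_B)\ar@<-2pt>[l]_-{\langle 0,1\rangle}}\]
be an arbitrary rali point with kernel $X$, so (again by the Lemma) its positive cone is $P_X\times P_B$. In $\Grp$ there is a unique morphism of split extensions into the $\Grp$-classifier, given on the base by $\overline{\varphi}\colon B\to\AUT(X)$, $b\mapsto\varphi_b$, and on the total space by $1\times\overline{\varphi}$, as in diagram \eqref{eq:AutX}. Two things must be checked. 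First, $\overline{\varphi}$ lands in $\Aut(X)$, not merely $\AUT(X)$: this is immediate because we are in $\OrdGrp$, so each $\varphi_b(x)=s(b)+k(x)-s(b)$ is monotone (conjugation by a fixed element is a monotone automorphism). Second — this is the crucial point — $\overline{\varphi}$ is monotone as a map $(B,P_B)\to\Aut_{\widetilde P}(X)$, i.e. $b\in P_B\Rightarrow\varphi_b\in\widetilde P$. But since the point is a rali, $P_\prod$ is compatible, and by Proposition \ref{prop:prod}(2) this forces $\varphi_b\sim\id$ pointwise for every $b\in P_B$; that is precisely $\varphi_b\in\widetilde P$. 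Consequently $1\times\overline{\varphi}$ is monotone from $X\rtimes_\varphi B$ (product order) to $X\rtimes\Aut_{\widetilde P}(X)$ (product order), since it is $\id_{P_X}\times\overline{\varphi}$ on positive cones.

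Finally I would address uniqueness. Any morphism in $\Rali_X$ from the given point to \eqref{eq:*} is in particular a morphism of split extensions in $\Grp$ with $a=\id_X$, and by the classifier property of \eqref{eq:autX} in $\Grp$ there is exactly one such — necessarily $(\id_X,\,1\times\overline{\varphi},\,\overline{\varphi})$. Since we have just shown this candidate is a morphism in $\OrdGrp$, it is the unique morphism in $\Rali_X$, so \eqref{eq:*} is terminal. The main obstacle, such as it is, is purely bookkeeping: one must be careful that ``rali'' has been characterized as ``product order'' (Lemma (1)), that the product order is compatible iff $\varphi_b\sim\id$ on positives (Proposition \ref{prop:prod}(2)), and that $\widetilde P$ was defined so that these conditions match up verbatim — once that chain is laid out, the universal property follows by transporting the $\Grp$-classifier diagram and checking monotonicity on positive cones, which is routine.
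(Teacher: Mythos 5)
Your proposal is correct and follows essentially the same route as the paper: verify that the product order (determined by $\widetilde{P}$ via Proposition \ref{prop:prod}(2)) makes \eqref{eq:*} a rali point, then transport the unique $\Grp$-classifier morphism $(\id_X,1\times\overline{\varphi},\overline{\varphi})$ and check monotonicity of $\overline{\varphi}$ using the rali hypothesis and Proposition \ref{prop:prod}(2) again, with uniqueness inherited from $\Grp$. Your explicit verification that $\widetilde{P}$ is an admissible positive cone is a detail the paper leaves to the reader, but the argument is the same.
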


\begin{proof}
First of all it is easy to check that the product order in $X\rtimes \Aut_{\widetilde{P}}(X)$, which in fact coincides with the lexicographic order, is compatible in \eqref{eq:*}. So \eqref{eq:*} is a rali point.

Given a rali point $\xymatrix{X\rtimes_\varphi B\ar@<-2pt>[r]_-{\pi_2}&B\ar@<-2pt>[l]_-{\langle 0,1\rangle}}$ with kernel $X$, we know that there is a unique group homomorphism $\overline{\varphi}\colon B\to\AUT(X)$, which factors through $\Aut(X)$, making diagram \eqref{eq:AutX} commute. Moreover, its corestriction $\widetilde{\varphi}\colon B\to \Aut_P(X)$ is monotone -- since $X\rtimes_\varphi B$ has the product order, by Proposition \ref{prop:prod}(2) $\varphi\sim\id$ for every $b\geq 0$ -- and $1\times\widetilde{\varphi}\colon X\rtimes_\varphi B\to X\rtimes\Aut^\sim(X)$ is also clearly monotone. Uniqueness of $\overline{\varphi}$ guarantees uniqueness of $\widetilde{\varphi}$.
\end{proof}

\begin{corollary}
$\OrdGrp$ has $\Rali$-classifiers. \hfill$\Box$
\end{corollary}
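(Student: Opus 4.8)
The plan is simply to unwind the definition of $\Rali$-classifiers and appeal to the preceding Theorem. By Definition, $\OrdGrp$ has $\Rali$-classifiers exactly when, for every ordered group $X$, the category $\Rali_X$ of rali points with kernel $X$ possesses a terminal object. But the Theorem immediately above constructs precisely such a terminal object: the rali point \eqref{eq:*} associated with the positive cone $\widetilde{P}=\{\alpha\,;\,\alpha(x)\sim x\text{ for all }x\in X\}$ on $\Aut(X)$. So there is essentially nothing left to prove, and I would present the argument as a one-line deduction.

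The only points one should pause on — and they are all routine — are that $\widetilde{P}$ really is a positive cone on $\Aut(X)$ and that it is admissible in the sense defined before the Proposition. Closure of $\widetilde{P}$ under composition is immediate from transitivity of $\sim$ together with monotonicity of $\sim$ under the group operation; closure under inversion follows since, if $\alpha(x)\sim x$ for all $x$, then applying this to $\alpha^{-1}(x)$ gives $x\sim\alpha^{-1}(x)$; closure under conjugation in $\Aut(X)$ is likewise a short computation. Admissibility holds tautologically: $\alpha\sim\id$ in $\Aut_{\widetilde{P}}(X)$ means exactly $\alpha\in\widetilde{P}$, i.e. $\alpha\sim\id$ pointwise in $X$, which is the required condition from Theorem~\ref{th:lexic} applied to \eqref{eq:Aut}. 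The Theorem already records that the product order on $X\rtimes\Aut_{\widetilde{P}}(X)$ coincides with the lexicographic order and is compatible, so \eqref{eq:*} is genuinely a rali point.

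There is no real obstacle at this stage: all the substance has been absorbed into the preceding Theorem, whose proof does the work of checking the universal property (existence and uniqueness of the comparison morphism $\widetilde{\varphi}\colon B\to\Aut_{\widetilde{P}}(X)$, monotonicity of $1\times\widetilde{\varphi}$, and uniqueness inherited from uniqueness of $\overline{\varphi}$ in $\Grp$). Accordingly, the proof of the Corollary is just the remark that ``$\Rali_X$ has a terminal object for every $X$'' is literally the statement of that Theorem, hence the $\Box$.
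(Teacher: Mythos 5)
Your proposal is correct and matches the paper exactly: the corollary is stated with an immediate $\Box$ precisely because it is nothing more than the observation that the preceding Theorem exhibits a terminal object of $\Rali_X$ for every ordered group $X$, which is literally the definition of $\OrdGrp$ having $\Rali$-classifiers. The additional routine checks you mention (that $\widetilde{P}$ is an admissible positive cone) are indeed part of the content already absorbed into the Theorem, so nothing further is needed.
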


This result can be naturally extended in the following way.

\begin{prop}
Let $X$ be an ordered group and $P$ an admissible positive cone in $\Aut(X)$. Then the maximal point
\[\xymatrix{X\ar[r]^-{\langle 1,0\rangle}&X\rtimes \Aut_P(X)\ar@<-2pt>[r]_-{\pi_2}&\Aut_P(X)\ar@<-2pt>[l]_-{\langle 0,1\rangle}}\]
classifies the class $\SS_X$ of split extensions $\xymatrix{X\rtimes_\varphi B\ar@<-2pt>[r]_-{\pi_2}&B\ar@<-2pt>[l]_-{\langle 0,1\rangle}}$ with kernel $X$ such that
\begin{enumerate}
\item[\em (1)] for all $b\in P_B$, $\varphi_b\in P$;
\item[\em (2)] for all $x\in X$, $x\geq 0$ provided that there exists $b\in P_B$ with $(x,b)\geq 0$ in $X\rtimes_\varphi B$ and $\varphi_b\sim\id$ in $\Aut_P(X)$.
\end{enumerate}
\end{prop}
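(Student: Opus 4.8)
The plan is to follow the proof of the $\Rali$-classifier theorem above almost verbatim, with the product order replaced by the lexicographic order and $\widetilde P$ replaced by the given admissible cone $P$. Throughout, $\varphi$ denotes both the action of a point in $\SS_X$ and the evaluation action $\varphi_\alpha(x)=\alpha(x)$ of the displayed maximal point.

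First I would check that the maximal point itself lies in $\SS_X$. Admissibility of $P$ together with Theorem~\ref{th:lexic} guarantees that the lexicographic order on $X\rtimes\Aut_P(X)$ is compatible, so the maximal point is well defined; for it, condition~(1) is the tautology $\alpha\in P\Rightarrow\varphi_\alpha=\alpha\in P$, while condition~(2) holds by inspection of $P_{\lex}$, since $(x,\alpha)\in P_{\lex}$ with $\alpha\sim\id$ forces $x\geq 0$. Next, given a split extension in $\SS_X$, say with action $\varphi$ over a group $B$, diagram~\eqref{eq:AutX} supplies the unique group homomorphism $\overline\varphi\colon B\to\AUT(X)$; since the given split extension is a point in $\OrdGrp$, Theorem~\ref{th:lexic} forces each $\varphi_b$ to be monotone, so $\overline\varphi$ corestricts to $\widetilde\varphi\colon B\to\Aut(X)$. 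I would then verify the two monotonicity statements that make $(\id_X,\,1\times\widetilde\varphi,\,\widetilde\varphi)$ a morphism in $\SS_X$ to the maximal point: that $\widetilde\varphi\colon(B,P_B)\to\Aut_P(X)$ is monotone is exactly condition~(1); and that $1\times\widetilde\varphi\colon X\rtimes_\varphi B\to X\rtimes\Aut_P(X)$ is monotone into $P_{\lex}$ follows by a two-case split: if $(x,b)\geq 0$ in $X\rtimes_\varphi B$ then $b\geq 0$, hence $\widetilde\varphi(b)\geq 0$ in $\Aut_P(X)$, and either $\widetilde\varphi(b)>0$, giving $(x,\widetilde\varphi(b))\in P_{\lex}$ at once, or $\widetilde\varphi(b)\sim\id$, in which case condition~(2) yields $x\geq 0$ and again $(x,\widetilde\varphi(b))\in P_{\lex}$.

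Uniqueness then reduces to uniqueness in $\Grp$: any morphism $(\id_X,b,c)$ in $\SS_X$ to the maximal point, composed with the monomorphic inclusions $X\rtimes\Aut_P(X)\hookrightarrow X\rtimes\AUT(X)$ and $\Aut_P(X)\hookrightarrow\AUT(X)$, is a morphism of split extensions in $\Grp$ into~\eqref{eq:autX} with kernel component $\id_X$, hence equals $(\id_X,1\times\overline\varphi,\overline\varphi)$; faithfulness of the forgetful functor $\OrdGrp\to\Grp$ then pins down $b$ and $c$. I do not anticipate a genuine obstacle here: the whole content is that conditions~(1) and~(2) have been tailored to be precisely the two monotonicity checks above, and since the target always carries the largest compatible (lexicographic) order there is no circularity in the argument. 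The only point deserving a second glance is that the $\Grp$-level classifying homomorphism $\overline\varphi$ takes values in monotone automorphisms — which is automatic as soon as the source is a bona fide point in $\OrdGrp$, again by Theorem~\ref{th:lexic}.
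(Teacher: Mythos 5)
Your proof is correct and follows essentially the same route as the paper's: take the unique $\Grp$-level classifying homomorphism $\overline{\varphi}$, corestrict it to $\Aut(X)$, and observe that condition (1) is exactly monotonicity of $\widetilde{\varphi}$ while condition (2) supplies monotonicity of $1\times\widetilde{\varphi}$ via the case split on $\varphi_b>0$ versus $\varphi_b\sim\id$ in $P_{\lex}$. You merely spell out the details (membership of the maximal point in $\SS_X$, the case analysis, and uniqueness via the forgetful functor to $\Grp$) that the paper's two-line proof leaves implicit.
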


\begin{proof}
Given a point in $\SS_X$, we know that there is a unique group homomorphism $\overline{\varphi}\colon B\to \Aut(X)$ making the following diagram
\[
\xymatrix{X\ar@{=}[d]\ar[r]^-{\langle 1,0\rangle}&X\rtimes_\varphi B\ar@<-2pt>[r]_-{\pi_2}\ar[d]^{1\times\overline{\varphi}}&B\ar@<-2pt>[l]_-{\langle 0,1\rangle}\ar[d]^{\overline{\varphi}}\\
X\ar[r]^-{\langle 1,0\rangle}&X\rtimes\Aut_P(X)\ar@<-2pt>[r]_-{\pi_2}&\Aut_P(X)\ar@<-2pt>[l]_-{\langle 0,1\rangle}}\]
commute. Then condition (1) guarantees that $\overline{\varphi}$ is monotone, and (2) gives monotonicity of $1\times\overline{\varphi}$.
\end{proof}

\begin{remark}
Given an ordered group $X$ such that, for every $x\in X$, either $x\geq 0$ or $-x\geq 0$, consider in $\Aut(X)$ the admissible order defined by $P^+=\{\alpha\,;\,(\forall x\in P_X)\;\alpha(x)\geq x\}$. We denote by $\Aut^+(X)$ this ordered group and by $\SS^+$ the class of split extensions defined as in the proposition above, that is, $\SS^+$ is the union of $\SS^+_X$ for all such ordered groups $X$. The split extensions classified by $\Aut^+(X)$ depend very much on the action $\varphi$. For instance, $\Aut^+(\QQ)$ classifies a split extension
\[\xymatrix{\QQ\ar[r]^-{\langle 1,0\rangle}&\QQ\rtimes_\varphi \ZZ\ar@<-2pt>[r]_-{\pi_2}&\ZZ\ar@<-2pt>[l]_-{\langle 0,1\rangle}}\]
when $\varphi=\id$ only if it is a rali, while when $\varphi_n(x)=2^n\,x$ it classifies any such split extension.\\

We could use analogously the order in $\Aut(X)$ defined by $P^-=\{\alpha\,;\,(\forall x\leq 0)\; \alpha(x)\geq x\}$.
\end{remark}

\begin{theorem}\label{th:noclass}
$\OrdGrp$ has no split extensions classifiers.
\end{theorem}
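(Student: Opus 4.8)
The plan is to show that if a split extension classifier existed for all of $\OrdGrp$, then in particular it would have to be a classifier for the class $\Rali$ of rali points, and we would also be able to detect a contradiction by comparing the orders forced on $\Aut(X)$ by different, incompatible split extensions with the same kernel. More precisely, a classifier \eqref{eq:Sclass} for all split extensions with kernel $X$ must, by the Proposition preceding Theorem~\ref{th:lexic}'s corollaries (the one asserting $A(X)\cong\Aut(X)$ as groups), have underlying group $\Aut(X)$, and its positive cone $P_A$ must contain \emph{every} admissible positive cone $P$ in $\Aut(X)$ for which \eqref{eq:Aut} lies in the class under consideration. So the strategy is: exhibit a single ordered group $X$ together with two split extensions with kernel $X$ whose classifying maps force $P_A$ to contain two admissible cones whose join is not admissible — i.e. $P_A$ itself would fail to be admissible, contradicting the requirement that \eqref{eq:Sclass} be a genuine split extension in $\OrdGrp$ (for which, by Theorem~\ref{th:lexic}, the order on $\Aut(X)$ must satisfy: $\alpha\sim\id$ in $\Aut_{P_A}(X)$ implies $\alpha\sim\id$ pointwise).

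First I would fix a convenient $X$, the natural candidate being $X=\QQ$ (or $\ZZ$), whose monotone automorphism group is easy to describe: $\Aut(\QQ)\cong\QQ^{>0}$ under multiplication, acting by $\alpha_q(x)=qx$, and $\alpha_q$ is monotone for all $q>0$. Next I would produce two split extensions with kernel $\QQ$ realising ``opposite'' behaviour. The rali point over $\ZZ$ with $\varphi_n(x)=2^nx$ (the point \eqref{eq:strong}) forces, via its classifying morphism $\overline\varphi\colon\ZZ\to\Aut(\QQ)$, the element $\alpha_2$ to lie in $P_A$ (since $1\geq 0$ in $\ZZ$); one also has $\alpha_{1/2}=\alpha_2^{-1}$ forced by the point with action $\varphi_n(x)=2^{-n}x$, or more simply by reversing the order on the base $\ZZ$. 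Thus $P_A$ must contain both $\alpha_2$ and $\alpha_2^{-1}$, hence $\alpha_2\sim\id$ in $\Aut_{P_A}(\QQ)$. But $\alpha_2$ is not pointwise $\sim\id$ on $\QQ$ (e.g. $2\cdot 1=2\not\sim 1$), so by Theorem~\ref{th:lexic} the order $P_A$ is not admissible, and \eqref{eq:Sclass} cannot be a split extension in $\OrdGrp$ — contradiction.

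The key steps in order are therefore: (1) recall that any classifier $A(X)$ must be $\Aut(X)$ as a group and that its positive cone contains the positive cone of every point it classifies, pulled back along the classifying map; (2) compute $\Aut(\QQ)$ explicitly and identify the multiplication-by-$2$ automorphism $\alpha_2$; (3) exhibit split extensions with kernel $\QQ$ over $\ZZ$ whose classifying maps send a positive generator of $\ZZ$ to $\alpha_2$ and to $\alpha_2^{-1}$ respectively — these exist by the analysis of Section~3, as the actions $n\mapsto(x\mapsto 2^{\pm n}x)$ are monotone and admit compatible (indeed product) orders; (4) conclude that $\alpha_2$ and $\alpha_2^{-1}$ both lie in $P_A$, so $\alpha_2\sim\id$ in $\Aut_{P_A}(\QQ)$; (5) invoke admissibility (Theorem~\ref{th:lexic}) to get $\alpha_2\sim\id$ pointwise, which is false, yielding the contradiction.

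I expect the main obstacle to be a bookkeeping subtlety rather than a conceptual one: namely, making sure that the \emph{unique} classifying morphism really does carry the relevant positive base-element to the claimed automorphism — one must use the explicit formula $\overline\varphi(b)=\varphi_b$ from \eqref{eq:AutX} and check that monotonicity of $\overline\varphi$ (part of its being a morphism in $\OrdGrp$) forces $\varphi_b\in P_A$ exactly when $b\in P_B$, so that indeed both $\alpha_2$ (from one point) and $\alpha_2^{-1}$ (from the other) are compelled into $P_A$. A secondary point to handle carefully is the direction of the implication defining admissibility: one needs $\alpha\sim\id$ in $\Aut_{P_A}(X)\Rightarrow\alpha\sim\id$ pointwise, which is precisely the condition isolated just before the Proposition on classifiers and is what fails here; no delicate estimate is needed, just a clean citation of Theorem~\ref{th:lexic} applied to the split extension \eqref{eq:Aut} with $P=P_A$.
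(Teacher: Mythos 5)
Your proof is correct, and its skeleton is the same as the paper's: reduce to the fact (the Proposition at the start of Section 5) that a classifier for all split extensions with kernel $X$ must have underlying group $\Aut(X)$ with a positive cone $P_A$ that is both admissible and forced to contain certain automorphisms, then derive a contradiction by trapping some $\alpha$ with $\alpha\sim\id$ in $P_A$ but $\alpha\not\sim\id$ pointwise. The only real difference is how the two ``opposite'' elements are pushed into $P_A$: the paper observes that for a totally preordered $X$ both $P^+$ and $P^-$ are admissible cones, hence (by the same Proposition) both are contained in $P_A$, and uses $\alpha\in P^+\Leftrightarrow\alpha^{-1}\in P^-$; you instead exhibit two concrete points over $(\ZZ,\NN)$ with actions $\varphi_n(x)=2^{\pm n}x$ whose (unique, monotone) classifying maps force $\alpha_2$ and $\alpha_2^{-1}$ into $P_A$. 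Your route is more hands-on and sidesteps checking admissibility of $P^+$ and $P^-$, at the cost of verifying that the classifying morphism really is $\overline{\varphi}$ up to the identification $A(X)\cong\Aut(X)$ (which you correctly flag, and which does follow from uniqueness of the classifying map in $\Grp$); the paper's route is slightly more abstract but dispatches the forcing step in one line via the general Proposition applied to the maximal points over $\Aut_{P^{\pm}}(X)$. Both land on the same contradiction with the admissibility criterion of Theorem~\ref{th:lexic} for $X=\QQ$ and multiplication by an integer $n>1$.
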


\begin{proof}
For each ordered group $X$, assume that $\xymatrix{X\rtimes \Aut_P(X)\ar@<-2pt>[r]_-{\pi_2}&\Aut_P(X)\ar@<-2pt>[l]_-{\langle 0,1\rangle}}$ classifies the split extensions with kernel $X$.
As we remarked above, when $X$ is an ordered group such that each $x\in X$ is either positive or negative, both $P^+$ and $P^-$ are admissible positive cones in $\Aut(X)$. Hence, both $P^+$ and $P^-$ are contained in $P$. Since $\alpha\in P^+$ if and only if its inverse $\alpha^{-1}\in P^-$, this shows that if $\alpha\in P^+$ then $\alpha\sim\id$ in $P$, and so $\alpha\sim\id$ pointwise in $X$ because $P$ must be admissible. But this is not true in general: for instance, for $X=\QQ$ and $n$ a natural number larger than $1$, $\alpha(x)=n\,x\in P^+$ but pointwise $\alpha\not\sim\id$.
\end{proof}

\section*{Acknowledgements}

The results of this paper were obtained while the first author was a tutor of the second author under the programme \emph{Novos Talentos em Matem\'{a}tica} from the Gulbenkian Foundation.

We thank Graham Manuell for valuable discussions on the subject of this paper. We also thank Diana Rodelo, Andrea Montoli and Manuela Sobral for useful discussions on the concept of $\SS$-protomodularity.

\end{document}